\begin{document}

\begin{center}
{\large \bf PERIODIC SOLUTIONS OF PERIODICALLY PERTURBED PLANAR
AUTONOMOUS SYSTEMS: A TOPOLOGICAL APPROACH $^\dagger$}
\end{center}
\begin{center}
Mikhail Kamenskii$^1$, Oleg Makarenkov$^1$ and Paolo Nistri$^2$
\end{center}
\begin{center}
$^1$Department of Mathematics,\\
 Voronezh State University,
Voronezh, Russia.\\
E-mail: Mikhail\verb+@+kam.vsu.ru,$\;\;$ E-mail: omakarenkov\verb+@+kma.vsu.ru\\
$^2$ Dipartimento di Ingegneria dell'Informazione,\\
Universit\`a di Siena, 53100 Siena, Italy.\\
E-mail: pnistri\verb+@+dii.unisi.it
\end{center}

\date{}
\def\thefootnote{\fnsymbol{footnote}}
\footnotetext[2]{Supported by the national research project MIUR:
``Nonlinear and optimal control: geometrical and topological
methods'', by GNAMPA-CNR, by RFBR grants 02-01-00189, 05-01-00100,
by U.S.CRDF - RF Ministry of Education grant VZ-010 and by
President of Russian Federation Fellowship for Scientific Training
Abroad}
\def\thefootnote{\fnsymbol{footnote}}

\newtheorem{theorem}{Theorem}
\newtheorem{remark}{Remark}
\newtheorem{lemma}{Lemma}
\newtheorem{corollary}{Corollary}
\newtheorem{proposition}{Proposition}
\newtheorem{definition}{Definition}
\def\thedefinition{\arabic{definition}}
\def\thelemma{\arabic{lemma}}
\def\theremark{\arabic{remark}}
\def\theproposition{\arabic{proposition}}
\def\thecorollary{\arabic{corollary}}
\def\thetheorem{\arabic{theorem}}
\def\vs{\vskip12pt}
\def\qed{\hfill $\square$}
\def\squar{\vbox{\hrule\hbox{\vrule height 6pt \hskip
6pt\vrule}\hrule}}
\def\sqr{{\unskip\nobreak\hfil\penalty50\hskip2em\hbox{}\nobreak\hfil
{\squar} \parfillskip=0pt\finalhyphendemerits=0\par}}
\newenvironment{proof}{\noindent{\bf Proof}:\ \ }{\sqr\par\vs}
\def\inte{\int\limits}     \let\qq=\qquad       \let\q=\quad
\let\w=\widetilde          \let\wh=\widehat     \let\mx=\mbox
\let\ol=\overline          \let\D=\Delta        \let\d=\delta
\let\e=\epsilon            \let\g=\gamma        \def\mm{{-\!\circ}}
\let\G=\Gamma              \let\ba=\beta        \def\dist{{\fam0 dist\,}}
\let\a=\alpha              \let\th=\theta       \let\nn=\nonumber
\let\s=\sigma              \let\O=\Omega      \def\L{{\cal L}}
\def\N{\mbox{\bf N}}       \def\bp{\mbox{\bf P}} \let\emp=\emptyset
\def\Z{\mbox{\bf Z}}       \def\ind#1{\mathop{#1}\limits}
\def\C{\mbox{\bf C}}       \let\l=\lambda       \let\sm=\setminus
\let\o=\omega              \def\meas{{\fam0 meas\,}}
\def\nor#1{{\left\|\,#1\,\right\|}}             \let\bcup=\bigcup
\def\R{\mbox{\bf R}}       \let\vf=\varphi    \def\sca#1#2{\langle #1,#2\rangle}
\def\Re{{\rm Re\,}}         \def\Im{{\rm Im\,}}        \def\DD{{\cal D}}
\let\ds=\displaystyle      \let\lra=\longrightarrow
\def\dfrac#1#2{\ds{#1\over #2}} \let\y=\eta
\let\sb=\subset            \def\J{{\cal J}}
\def\qed{\hfill $\squar$}
\def\squar{\vbox{\hrule\hbox{\vrule height 6pt \hskip
6pt\vrule}\hrule}}
\def\toto{{\begin{array}{c}\rightarrow\\*[-2ex]
             \rightarrow \end{array}}}
\catcode`\@=11
\def\section{\@startsection {section}{1}{\z@}{-3.5ex plus-1ex minus
-.2ex}{2.3ex plus.2ex}{\bf}}

\noindent
\begin{center}
{\bf Abstract}
\end{center}
{\scriptsize Aim of this paper is to investigate the existence of
periodic solutions of a nonlinear planar autonomous system having
a limit cycle $x_0$ of least period $T_0>0$ when it is perturbed
by a small parameter, $T_1-$periodic, perturbation. In the case
when $T_0/T_1$ is a rational number $l/k$, with $l, k$ prime
numbers, we provide conditions to guarantee, for the parameter
perturbation $\varepsilon>0$ sufficiently small, the existence of
$klT_0-$ periodic solutions $x_\varepsilon$ of the perturbed
system which converge to the trajectory $\tilde x_0$ of the limit
cycle as $\varepsilon\to 0$. Moreover, we state conditions under
which $T=klT_0$ is the least period of the periodic solutions
$x_\varepsilon$. We also suggest a simple criterion which ensures
that these conditions are verified. Finally, in the case when
$T_0/T_1$ is an irrational number we show the nonexistence,
whenever $T>0$ and $\varepsilon>0$, of $T-$periodic solutions
$x_\varepsilon$ of the perturbed system converging to $\tilde
x_0$. The employed methods are based on the topological degree
theory.}

\vskip0.4truecm \noindent {\bf 2000 Mathematical Subject
Classification:} {\bf Primary: 34A34, 34C25, 34D10}, {\bf
Secondary: 47H11, 47H14.}

\vskip0.4truecm \noindent {\bf Keywords:} autonomous systems,
periodic perturbations, topological degree, periodic solutions.

\vspace{4mm} \noindent
\section {Introduction}\label{in}

\vspace{2mm} \noindent The paper is devoted to the study of the
existence and of the behaviour as $\varepsilon \to 0$ of periodic
solutions of a perturbed autonomous differential system in ${\rm
R}^2$ of the form

\begin{equation}\label{2}
\dot x = \psi(x) + \varepsilon \phi(t,x),
\end{equation}
where $\psi:{\rm R}^2\to {\rm R}^2$ is a twice continuously
differentiable function, $\phi:{\rm R}\times{\rm R}^2\to {\rm
R}^2$ is continuous and  $T_1$-periodic with respect to the first
variable and $\varepsilon>0$ is a small parameter.

\noindent To be specific, we assume that at $\varepsilon=0$ the
autonomous system
\begin{equation}\label{1}
  \dot x = \psi(x),
\end{equation}
has a limit cycle $x_0=x_0(t), \ t\ge0,$ of least period  $T_0$
satisfying the following condition

\vspace{3mm} \noindent

(A$_0$)- the linear system

\begin{equation}\label{ls}\dot y=\psi'(x_0(t))y
\end{equation}

does not have $2T_0$-periodic solutions linearly independent with
$\dot x_0(t).$

\vspace{4mm} \noindent Furthermore, we assume

(A$_1$)- there exist $l,k\in{\rm N}$ such that
$\dfrac{T_0}{T_1}=\dfrac{l}{k},$ with $l$ and $k$ prime numbers.

\vspace{4mm} \noindent This paper addresses the following
problems:

\vspace{2mm} \noindent 1. {\it To provide conditions on the
function $\phi$ which guarantee the existence of $\varepsilon_*>0$
such that system (\ref{2}) has a $T$-periodic solution
$x_\varepsilon,$ with $T=T(T_0,T_1)$ and
$\varepsilon\in(0,\varepsilon_*)$ satisfying the property
$$
x_\varepsilon(t)\to \tilde x_0 \ as \ \varepsilon\to 0,\quad
\mbox{whenever} \quad t\in[0,T],
$$
where $\tilde x_0= \{x\in {\rm R}^2: x=x_0(t), \, t\in[0,T_0]\}$
is the trajectory of the limit cycle of (\ref{1})}.

\vspace{2mm} \noindent 2. {\it To find an explicit estimation of
$\varepsilon_*>0.$}

\vspace{2mm} \noindent 3. {\it To investigate the existence of
periodic solutions for system (\ref{2}) in the case when condition
(A$_1$) is not verified, i.e. when $\dfrac{T_0}{T_1}$ is an
irrational number.}

\vspace{4mm} \noindent Several contributions have been made toward
solving the problem of the existence and the nature of periodic
solutions of a periodically perturbed autonomous system which are
close to the periodic solution of the autonomous system. We refer
to the papers \cite{L1}, \cite{L2}, \cite{Levinson} and to the
memoir \cite{L3} for first and second order perturbed autonomous
systems respectively. The employed methods are based on
perturbation and implicit function techniques which require more
regularity on the function $\phi$ of that required in this paper.
In \cite{L1}, under assumptions similar to ($A_0$) and ($A_1$) the
author provides sufficient conditions for the existence and
stability of periodic solutions of a perturbed autonomous system
in ${\rm R}^n$.  We would like to point out that in this paper we
are interested in periodic solutions of general form, that is
without any condition on their rate of convergence to $\tilde x_0$
with respect to $\varepsilon.$ In such a generality the uniqueness
of the periodic solutions is not guaranteed as in \cite{L1}, where
a special class of periodic solutions, converging to $\tilde x_0$
at the same rate of $\varepsilon,$ is considered. Similar results
for the existence of $D-$periodic functions, namely for functions
whose derivative is periodic, can be found in \cite{F1} and
\cite{F2}. Furthermore, in \cite{B1}, \cite{B2} and \cite{B3}
analogous existence and stability results are established under
different assumptions. Specifically, in \cite{B1}, under the
assumption that $1$ is the only characteristic multiplier of the
linearized system, which is a $q$-$th$ root of unity, for $q\in
{\bf N}$, it is proved that for any $p\in {\bf N}$ there exists a
parametrized family of periods and corresponding periodic
solutions of the perturbed autonomous system such that at
$\varepsilon=0$ the period is given by $q\,T_0/p$. In \cite{B2}
and \cite{B3} the behaviour, with respect to the period of the
disturbance, of the periodic surface described by a functional
equation and associated to the perturbed autonomous system is
studied. Finally, in \cite{Furi} and \cite{Furi1} existence and
multiplicity results of periodic solutions of a periodically
perturbed autonomous second order equation, defined on a manifold,
were proved by means of topological methods. An example of
periodically perturbed autonomous systems arising in Mathematical
Biology is given in \cite{G1}.

\noindent An other approach to the study of the existence of
periodic solutions of perturbed autonomous systems based on
topological degree and index theory is presented in \cite{CMZ} and
\cite{AGL} respectively. In these papers the perturbation is not
necessarily small, in the sense that it does not contain small
perturbation parameters, the period $T$ is fixed and the
autonomous system may have periodic orbits of period less than
$T.$

\vspace{3mm}\noindent In this paper to solve the proposed problems
we use a different approach which was previously introduced and
successfully employed by the authors in \cite{kmn1}, \cite{kmn2}
and \cite{kmn3} to treat the existence problem of periodic
solutions of perturbed nonautonomous systems of prescribed period.
Specifically, in \cite{kmn1} we  consider the nonautonomous system
of differential equations described by
$$
\dot x=\psi(t,x)+\varepsilon\phi(t,x), \eqno{(NA)}
$$
where $\phi,\ \psi:{\rm R}\times{\rm R}^n\to {\rm R}^n$ are
continuously differentiable, $T$-periodic with respect to time
$t$, functions and $\varepsilon$ is a small positive parameter.

\noindent The proposed approach is based on the linearized system
associated to (NA)
$$
\dot y=\frac{\partial\psi}{\partial x}(t,\Omega(t,0,\xi))y +
\phi(t,\Omega(t,0,\xi)), \eqno{(LNA)}
$$
where $\xi\in {\rm R}^n$ and $\Omega(\cdot,t_0,\xi)$ denotes the
solution of (NA) at $\varepsilon=0$ satisfying $x(t_0)=\xi$.
Specifically, consider the change of variable
$$
z(t)= \Omega(0,t,x(t)), \eqno{(CV)}
$$
and the solution $\eta(\cdot,s,\xi)$ of (LNA) such that $y(s)=0$.
If there exists a bounded open set $U\subset {\rm R}^n$ such that
$\Omega(T,0,\xi)=\xi$ for any $\xi\in\partial U$, and
$\eta(T,s,\xi)-\eta(0,s,\xi)\not=0,$ for any $s\in[0,T],$  and any
$\xi\in\partial U.$ Then (NA) has a $T$-periodic solution for
$\varepsilon>0$ sufficiently small provided that
$\Gamma(\eta(T,0,\cdot), U)\not=0$. Here $\Gamma(F,U)$ denotes the
rotation number of a continuous map $F:\overline U\to {\rm R}^2$.
Observe that $\Gamma(F,U)$ coincides with ${\rm deg}(F,U,0)$, the
topological degree of $F$ at $0$ relative to $U$. In what follows,
we will omit the point $0$ in the notation for the topological
degree.

\noindent The advantage of the proposed approach as compared with
the classical averaging method, which is one of the most useful
tools for treating the existence problem of periodic solutions of
nonautonomous periodic systems, mainly consists in the fact that
in order to use this second method for establishing the existence
of periodic solutions in perturbed systems of the form (NA) one
must assume that the change of variable (CV) is $T$-periodic with
respect to $t$ for every $T$-periodic function $x$ such that
$\Omega(0,t, x(t))\in U,$ for any $t\in [0,T]$, instead that only
on the boundary of the bounded open set $U$.

\noindent The same assumption is necessary in vibrational control
problems, \cite{b} and \cite{sch}, to reduce the considered system
to the standard form for applying the averaging method. For an
extensive list of references on this topic see \cite{bullo}.

\noindent Our approach has been also employed in \cite{kmn2} to
prove the existence of periodic solutions for a class of first
order singularly perturbed differential systems.

\noindent Furthermore, in \cite{kmn3} we have considered two
nonlinear small periodic perturbations of $\psi(t,x)$ with
multiplicative different powers of $\varepsilon>0$ and we have
proved the existence of $T-$periodic solutions of the resulting
system. We have also showed the presence of the so-called
frequency pulling phenomenon in the case of a special class of
planar autonomous systems when they are perturbed by periodic
terms with period close to that of the periodic solution of the
autonomous system, whose existence is assumed.

\noindent The aim of this paper is at solving problems 1-3 by
using the approach outlined above, extending in this way its
application to the study of the existence of periodic solutions
and of their behaviour for general, periodically perturbed, planar
autonomous systems having a limit cycle. In particular, we are
interested in investigating the relationship between the period of
the limit cycle and that of the nonautonomous perturbations in
order to have periodic solutions of (\ref{1}) with the period
expressed in terms of the previous ones.

\noindent To the best knowledge of the authors the present paper
treats for the first time the problem of evaluating
$\varepsilon_*>0$, i.e. problem 2. stated before, for the
application of the averaging principle via topological methods.

 \noindent
The paper is organized as follows. In section 2 we show our main
result: theorem 1 which provides sufficient conditions to ensure,
for $\varepsilon>0$ sufficiently small, the existence of
$klT_0-$periodic solutions $x_\varepsilon$ to (\ref{1}) converging
to $\tilde x_0$ as $\varepsilon\to 0$. In theorem 2 we show, under
a mild extra assumption, that $klT_0$ is the least period.

\noindent In section 3 we present a simple method to ensure that
the conditions of theorem 1 are satisfied.

\noindent Finally, in Section 4 we establish a nonexistence
result: theorem 4, in the case when $T_0/T_1$ is irrational.
Precisely, we show that, for any $T>0$ and $\varepsilon>0$, there
is not a $T-$periodic solution $x_\varepsilon$ of (\ref{1}) such
that $x_\varepsilon \to \tilde x_0$ as $\varepsilon\to 0$. An
example illustrating the existence result is also provided.

\section{The main result}\label{os}

Throught this section we assume that $T=kl T_0$ and we will denote
by $F^\prime_{(i)}$ the derivative of the function $F$ with
respect to the $i$-$th$ argument. Let $x(t)=\Omega(t,0,\xi)$ be
the solution of system (\ref{1}) satisfying $x(0)=\xi.$ Consider
the following auxiliary system of linear ordinary differential
equations
\begin{equation}\label{vsp}
  \dot y = \psi'(\Omega(t,0,\xi))y+\phi(t,\Omega(t,0,\xi)).
\end{equation}
and let $y(t)=\eta(t,s,\xi)$ the solution of (\ref{vsp})
satisfying $y(s)=0.$ Observe that if $\psi=0$ then
$$
\eta(T,s,\xi)-\eta(0,s,\xi)=\int\limits_0^{T} \phi(\tau,\xi)d\tau.
$$
Therefore, in this case, the function $\dfrac{1}{T}\left(
\eta\left(T,s,\xi\right)-\eta(0,s,\xi)\right)$ is the average on
the interval $[0,T]$ of the function $\phi$ with respect to the
first variable. This function is the basis of the classical
averaging method, one of the most relevant tools to investigate
the existence of periodic solution of (NA) when $\psi=0$ (see
\cite{bog}).

\vspace{3mm}\noindent We can prove the following preliminary
result.

\begin{lemma}\label{le1}
$$
\eta(t,s,\xi)  =  \Omega'_{(3)}(t,0,\xi)\int\limits_s^t
\Phi(\tau,\xi) d\tau,
$$
where
\begin{equation}\label{dp}
 \Phi(t,\xi)=\Omega'_{(3)}(0,t,\Omega(t,0,\xi))\phi(t,\Omega(t,0,\xi)).
\end{equation}
\end{lemma}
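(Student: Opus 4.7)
The plan is to apply variation of parameters to the linear nonhomogeneous system (\ref{vsp}), with the crucial ingredient being a clean identification of the fundamental matrix and its inverse in terms of the flow $\Omega$.

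First I would observe that the matrix-valued function $M(t) := \Omega'_{(3)}(t,0,\xi)$ is the principal fundamental matrix solution at $t=0$ of the homogeneous part of (\ref{vsp}). Indeed, differentiating the identity $\Omega(0,0,\xi) = \xi$ in $\xi$ gives $M(0) = I$, and differentiating the ODE $\dot\Omega(t,0,\xi) = \psi(\Omega(t,0,\xi))$ in $\xi$ (smoothness of $\psi$ justifies the interchange) yields $\dot M(t) = \psi'(\Omega(t,0,\xi))\,M(t)$.

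Next, variation of parameters applied to (\ref{vsp}) with the initial condition $y(s)=0$ produces
\begin{equation*}
\eta(t,s,\xi) = M(t)\int_s^t M(\tau)^{-1}\phi(\tau,\Omega(\tau,0,\xi))\,d\tau.
\end{equation*}
So the whole problem reduces to showing $M(\tau)^{-1} = \Omega'_{(3)}(0,\tau,\Omega(\tau,0,\xi))$, after which the formula in the statement follows immediately from the definition of $\Phi$.

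To obtain this identification I would use the group property of the autonomous flow: for every $\tau$ and every $\xi$,
\begin{equation*}
\Omega\bigl(0,\tau,\Omega(\tau,0,\xi)\bigr) = \xi.
\end{equation*}
Differentiating both sides in $\xi$ via the chain rule gives
\begin{equation*}
\Omega'_{(3)}\bigl(0,\tau,\Omega(\tau,0,\xi)\bigr)\cdot\Omega'_{(3)}(\tau,0,\xi) = I,
\end{equation*}
which is exactly $M(\tau)^{-1} = \Omega'_{(3)}(0,\tau,\Omega(\tau,0,\xi))$. Substituting into the variation-of-parameters formula and recognizing the integrand as $\Phi(\tau,\xi)$ completes the proof.

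The only delicate point is justifying that the flow derivative $\Omega'_{(3)}(t,0,\xi)$ exists, is continuous, and genuinely satisfies the variational equation (so that it is a fundamental matrix and in particular invertible); this is standard and is guaranteed by the $C^2$ regularity of $\psi$ assumed in the introduction. Once this is in place, the argument is purely a chain-rule computation, so I do not expect any serious obstacle.
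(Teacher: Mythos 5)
Your proposal is correct and follows essentially the same route as the paper: identify $\Omega'_{(3)}(t,0,\xi)$ as the principal fundamental matrix of the variational equation, obtain its inverse as $\Omega'_{(3)}(0,t,\Omega(t,0,\xi))$ by differentiating the flow identity $\Omega(0,t,\Omega(t,0,\xi))=\xi$ in $\xi$, and conclude by the variation-of-constants formula. The paper's proof is exactly this computation (its equation (\ref{f12}) is your chain-rule identity), so there is nothing to add.
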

\begin{proof}
 Observe that the matrix $\Omega'_{(3)}(t,0,\xi)$ is the
fundamental matrix, satisfying $\Omega'_{(3)}(0,0,\xi)=I$, for the
linear system
$$
  \dot y =\psi'(\Omega(t,0,\xi))y.
$$
Furthermore,
$\left(\Omega'_{(3)}(t,0,\xi)\right)^{-1}=\Omega'_{(3)}(0,t,\Omega(t,0,\xi)),$
in fact, by deriving with respect to $\xi$ the identity
$$
\Omega(0,t,\Omega(t,0,\xi))=\xi,\; \mbox{whenever} \; \xi\in{\rm
R}^2,
$$
we obtain
\begin{equation}\label{f12}
\Omega'_{(3)}(0,t,\Omega(t,0,\xi))\Omega'_{(3)}(t,0,\xi)=I,\;
\mbox{whenever} \; \xi\in{\rm R}^2.
\end{equation}
Therefore, by the variation of constants formula for the
nonhomogeneous system (\ref{vsp}) we obtain
\begin{eqnarray}
  \eta(t,s,\xi) & = & \int\limits_s^t
\Omega'_{(3)}(t,0,\xi)\left(\Omega'_{(3)}(\tau,0,\xi)\right)^{-1}\phi(\tau,
\Omega(\tau, 0,\xi))d\tau = \nonumber\\
      & = & \Omega'_{(3)}(t,0,\xi)\int\limits_s^t \Phi(\tau,\xi) d\tau.\nonumber
\end{eqnarray}
\end{proof}

\noindent Since the trajectory $\tilde x_0$ is a Jordan curve, its
interior $U$ is a bounded, simply connected, open set of ${\rm
R}^2$. In order to prove, by the proposed approach, the existence
of periodic solutions to (\ref{2}) converging to $\tilde x_0$ as
$\varepsilon\to 0,$ we introduce a family of open sets
$W_\gamma(U)$ as follows
$$
  W_\gamma(U)=\left\{\begin{array}{ll} U\backslash \left(\partial
  U+|\gamma|B\right) & {\rm if\ }\gamma<0, \\
  U\cup \left(\partial
  U+|\gamma|B\right) & {\rm if\ }\gamma>0,
  \end{array}\right.
$$
where $B\subset{\bf R}^2$ is the open unit ball, thus we have
\begin{equation}\label{conv11}
B_\gamma(\partial U):= ((W_\gamma(U)\cup U)\backslash
W_\gamma(U))\bigcup \,((W_\gamma(U)\cup U)\backslash U)\to \tilde
x_0\quad {\rm as\ }\gamma \to 0,
\end{equation}
where $\partial U=\tilde x_0$.

\vspace{3mm} \noindent
\begin{definition}\label{def1}
Define the following positive constants
\begin{eqnarray}
  M_\gamma&=&\max_{t\in[0,T],\ \xi\in \overline{B_\gamma(\partial U)}}
  \|\Phi(t,\xi)\|,\nonumber\\
  M_\gamma'&=&\max_{t\in[0,T],\ \xi\in \overline{B_\gamma(\partial U)}}
  \|\Phi'_{(2)}(t,\xi)\|,\nonumber\\
  L_\gamma'&=&\max_{\xi\in \overline{B_\gamma(\partial U)}}\|
  \Omega'_{(3)}(T,0,\xi)\|,\nonumber\\
  L_\gamma''&=&\max_{\xi\in \overline{B_\gamma(\partial U)}}\|
  \Omega''_{(3)(3)}(T,0,\xi)\|,\nonumber\\
  K_0&=&\min_{s\in[0,T],\ \xi\in \partial U}\|\eta(T,s,\xi)-
  \eta(0,s,\xi)\|,\nonumber\\
  K_\gamma&=&\min_{\xi\in\partial  W_\gamma(U)}\|\xi-\Omega(T,0,\xi)\|.
  \nonumber
\end{eqnarray}

\vspace{3mm} \noindent Moreover, let $\gamma_0>0$ such that system
(\ref{1}) does not have $T_0$-periodic solutions with initial
condition belonging to the open set $B_{-\gamma_0}(\partial
U)\bigcup B_{\gamma_0}(\partial U).$
\end{definition}

\noindent Observe that condition (A$_0$) guarantees the existence
of the constant $\gamma_0$ in the previous definition.

\vspace{4mm} \noindent We can now formulate the main result of the
paper.

\begin{theorem}\label{th1}
Assume ($A_0$), ($A_1$) and

{\rm (A$_2$)-} $\quad \eta(T,s,\xi)-\eta(0,s,\xi)\not=0,\
\mbox{for any} \ s\in[0,T],\ \mbox{and any}\ \xi\in\partial U,$

{\rm (A$_3$)-} $\quad {\rm deg}(\eta(0,T,\cdot),U)\not=1.$

\vspace{2mm} \noindent Then for every $-\gamma_0<\gamma<\gamma_0,\
\gamma\not=0$ and for
\begin{equation}\label{est}
0<\varepsilon<\min\left\{\frac{K_0}{T^2 M_\gamma
\left(M'_\gamma+\sqrt{2}M_\gamma L''_\gamma +M'_\gamma
L_\gamma'\right)},\; \frac{K_\gamma}{T M_\gamma
(1+L_\gamma'}\right\}:=\varepsilon_\gamma
\end{equation}
system (\ref{2}) has a $T$-periodic solution $x_\varepsilon$
belonging to the set
$$
\left\{x\in {\rm R}^2:\, x=\Omega(t,0,\xi),\ t\in [0,T], \ \xi\in
B_\gamma(\partial U)\right\}.
$$
\end{theorem}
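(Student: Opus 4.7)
The plan is to recast $T$-periodicity as a topological-degree problem on the set $W_\gamma(U)$ and verify both non-vanishing on its boundary and nonzero degree, with the two terms in (\ref{est}) corresponding respectively to these two estimates. First I perform the change of variable $z(t)=\Omega(0,t,x(t))$ already introduced in the paper: using (\ref{f12}), $x$ solves (\ref{2}) iff $\dot z=\varepsilon\Phi(t,z)$, with $\Phi$ from (\ref{dp}); hence
$$z(T;\xi,\varepsilon)\ =\ \xi+\varepsilon\int_0^T\Phi(\tau,z(\tau;\xi,\varepsilon))\,d\tau,$$
and since $x(t)=\Omega(t,0,z(t))$, the function $x_\varepsilon$ is $T$-periodic iff its initial datum $\xi$ is a zero of
$$F_\varepsilon(\xi)\ :=\ \xi-\Omega(T,0,z(T;\xi,\varepsilon)).$$

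Next I would verify non-vanishing of $F_\varepsilon$ on $\partial W_\gamma(U)$. Since $|\gamma|<\gamma_0$, Definition~\ref{def1} gives $\|\xi-\Omega(T,0,\xi)\|\ge K_\gamma$ there; combining the Gronwall bound $\|z(T;\xi,\varepsilon)-\xi\|\le\varepsilon TM_\gamma$ with the Lipschitz estimate of constant $L_\gamma'$ for $\Omega(T,0,\cdot)$ yields
$$\|F_\varepsilon(\xi)\|\ \ge\ K_\gamma-\varepsilon TM_\gamma(1+L_\gamma'),$$
which is strictly positive precisely for $\varepsilon$ below the second quantity in (\ref{est}); hence $\deg(F_\varepsilon,W_\gamma(U))$ is well defined.

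The crucial step is then to compute this degree and show it is nonzero. A second-order Taylor expansion of $\Omega(T,0,\cdot)$ about $\xi$, combined with Lemma~\ref{le1}, yields
$$F_\varepsilon(\xi)\ =\ \bigl(\xi-\Omega(T,0,\xi)\bigr)\ -\ \varepsilon\,\eta(T,0,\xi)\ +\ R_\varepsilon(\xi),$$
with a remainder bound $\|R_\varepsilon(\xi)\|\le\varepsilon^2 T^2M_\gamma(M_\gamma'+\sqrt{2}\,M_\gamma L_\gamma''+M_\gamma' L_\gamma')$ obtained by controlling the first-order deviation of $z(\tau)$ from $\xi$ through $M_\gamma'$ and the quadratic remainder of $\Omega(T,0,\cdot)$ through $L_\gamma''$. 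On $\partial U$ the term $\xi-\Omega(T,0,\xi)$ vanishes identically because $\partial U=\tilde x_0$ and $T=klT_0$ is a multiple of $T_0$, while the time-shifted version of (A$_2$) secures $\|\eta(T,s,\xi)-\eta(0,s,\xi)\|\ge K_0$. The first bound in (\ref{est}) is exactly the size that makes $R_\varepsilon$ negligible against these lower bounds, so a chain of admissible homotopies---first killing $R_\varepsilon$, then shifting the base time $s$ from $0$ to $T$ in $\eta(T,s,\cdot)-\eta(0,s,\cdot)$---reduces $\deg(F_\varepsilon,W_\gamma(U))$ to a quantity computable from $\deg(\eta(0,T,\cdot),U)$. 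Assumption (A$_3$) then forces $\deg(F_\varepsilon,W_\gamma(U))\neq0$, producing a zero $\xi_\varepsilon\in W_\gamma(U)$ of $F_\varepsilon$ and hence the sought $T$-periodic solution $x_\varepsilon(t)=\Omega(t,0,z(t;\xi_\varepsilon,\varepsilon))$.

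The principal obstacle will be the degree calculation in the last step. The map $F_0(\xi)=\xi-\Omega(T,0,\xi)$ vanishes on the entire curve $\partial U\subset\overline{W_\gamma(U)}$ (for $\gamma>0$) and may also vanish at equilibria of $\psi$ interior to $U$, so a direct ``$\varepsilon\to 0$'' homotopy is inadmissible; the $\varepsilon$-perturbation $\varepsilon\eta(T,0,\cdot)$ must be retained throughout, and the homotopy estimates must use both $K_0$ (near $\partial U$) and $K_\gamma$ (on $\partial W_\gamma(U)$). The precise interplay between these and the Taylor constants $M_\gamma,M_\gamma',L_\gamma',L_\gamma''$ is what produces the somewhat intricate expression in the first bound of (\ref{est}), and also accounts for the ``$\neq 1$'' normalization in (A$_3$), which reflects the unit Floquet multiplier of $\dot x_0$ on $\tilde x_0$ secured by (A$_0$).
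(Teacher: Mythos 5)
Your reduction to the Poincar\'e-type map $F_\varepsilon(\xi)=\xi-\Omega(T,0,z(T;\xi,\varepsilon))$ and the two boundary estimates are in the right spirit, but the degree computation is carried out on the wrong set, and this is a genuine gap rather than a presentational difference. The degree of $F_\varepsilon$ on $W_\gamma(U)$ is completely determined by the values of $F_\varepsilon$ on $\partial W_\gamma(U)$, where your own $K_\gamma$-estimate shows that $\xi-\Omega(T,0,\xi)$ dominates; hence ${\rm deg}(F_\varepsilon,W_\gamma(U))={\rm deg}(I-\Omega(T,0,\cdot),W_\gamma(U))$, which (by the paper's Lemma 2, resting on (A$_0$), Corollary~2 of \cite{CMZ} and the tangency of $\psi$ to $\partial U$) equals ${\rm deg}(\psi,U)=1$. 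No chain of homotopies admissible on $\partial W_\gamma(U)$ can turn this into ``a quantity computable from ${\rm deg}(\eta(0,T,\cdot),U)$'': the behaviour of $F_\varepsilon$ on $\partial U$, where you invoke (A$_2$) and $K_0$, is invisible to ${\rm deg}(\cdot,W_\gamma(U))$ because $\partial U$ is not a component of $\partial W_\gamma(U)$. Consequently your argument produces a zero of $F_\varepsilon$ somewhere in $W_\gamma(U)$, but not in the one-sided annulus $B_\gamma(\partial U)$ required by the statement: for $\gamma>0$ the zero may sit near an interior equilibrium of $\psi$ (which exists since ${\rm deg}(\psi,U)=1$), and for $\gamma<0$ the set $W_\gamma(U)$ is by construction bounded away from $\tilde x_0$, so the solution obtained cannot converge to the cycle. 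In your scheme assumptions (A$_2$) and (A$_3$) never actually enter the degree count.

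The missing idea is that the degree must be evaluated on the annular neighbourhood $B_\gamma(\partial U)$ of the cycle, via the additivity/excision identity ${\rm deg}(Q_\varepsilon,B_\gamma(\partial U))={\rm sign}(\gamma)\left({\rm deg}(Q_\varepsilon,W_\gamma(U))-{\rm deg}(Q_\varepsilon,U)\right)$, where $Q_\varepsilon\xi=\xi-\Omega(T,0,\xi)-\varepsilon\int_0^T\Phi(\tau,\xi)d\tau$. The first term equals $1$ by (A$_0$) (Lemma 2 and Corollary 1, which you do not supply), the second equals ${\rm deg}(\eta(0,T,\cdot),U)$ by (A$_2$) and Lemma 1 (on $\partial U$ the term $\xi-\Omega(T,0,\xi)$ vanishes identically and only the $\varepsilon$-term survives), and (A$_3$) makes the difference nonzero. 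Your two estimates ($K_\gamma$ on $\partial W_\gamma(U)$, $K_0$ on $\partial U$) are precisely the nondegeneracy of the field on the two components of $\partial B_\gamma(\partial U)$, so they should be reassembled around this difference of degrees. A secondary difference: the paper runs the argument in $C([0,T],{\rm R}^2)$ with the compact field $G_\varepsilon$ and the homotopy $\alpha(\lambda,t)=\lambda t+(1-\lambda)T$, and only then reduces to ${\rm R}^2$ by the domain-reduction property; your direct finite-dimensional formulation is viable in principle, but the explicit constants in (\ref{est}) would then have to be re-derived for the Poincar\'e map, and your remainder bound with exactly the paper's constants is asserted rather than proved.
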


\noindent To prove this theorem we need the following lemma.

\begin{lemma}\label{le2} Assume (A$_0$). Then there exists
$\gamma_1>0$ such that for $-\gamma_1<\gamma<\gamma_1$ and $\
\gamma\not=0$ we have
$$
{\rm deg}(I-\Omega(T,0,\cdot),W_\gamma(U))=1.
$$
\end{lemma}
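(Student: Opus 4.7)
The plan is to realize $\deg(I-\Omega(T,0,\cdot),W_\gamma(U))$ as a boundary winding number on the Jordan curve $\partial W_\gamma(U)$, computed via a first-order Taylor expansion of $\Omega(T,0,\cdot)$ about $\partial U$.

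For $|\gamma|$ sufficiently small I would parametrize $\partial W_\gamma(U)$ by $\xi_\gamma(\tau)=x_0(\tau)+\gamma\nu(\tau)+O(\gamma^2)$, $\tau\in[0,T_0]$, with $\nu(\tau)$ the unit outer normal to $\partial U$ at $x_0(\tau)$. Since $T=klT_0$, the whole cycle consists of fixed points of $\Omega(T,0,\cdot)$, and assumption ($A_0$) forces the nontrivial Floquet multiplier $\mu$ of $x_0$ to satisfy $\mu\neq\pm 1$; hence $\mu^{kl}\neq 1$ and the matrix $M(\tau):=\Omega'_{(3)}(T,0,x_0(\tau))$ has a simple eigenvalue $1$ with eigenvector $\dot x_0(\tau)$ and a simple eigenvalue $\mu^{kl}$ with a transversal eigenvector $v(\tau)$. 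Writing $\nu(\tau)=a(\tau)\dot x_0(\tau)+b(\tau)v(\tau)$, with $b(\tau)$ nowhere zero by transversality of $\nu$ to $\dot x_0$, the Taylor expansion of $\Omega(T,0,\cdot)$ about $x_0(\tau)$ gives
$$
\xi_\gamma(\tau)-\Omega(T,0,\xi_\gamma(\tau)) \;=\; \gamma(1-\mu^{kl})b(\tau)v(\tau) + O(\gamma^2).
$$

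Next I would choose $\gamma_1>0$ small enough that on $\partial W_\gamma(U)$ the leading term dominates the remainder uniformly. This yields $K_\gamma>0$ (so the degree is well defined) and makes the straight-line homotopy between the boundary field $V:=I-\Omega(T,0,\cdot)$ and its leading term nowhere zero, so the boundary winding of $V$ equals that of $\gamma(1-\mu^{kl})b(\tau)v(\tau)$. The scalar $\gamma(1-\mu^{kl})b(\tau)$ has constant sign along $\partial W_\gamma(U)$, so the winding reduces to that of the direction $v(\tau)$. Orienting $v(\tau)$ by the rule $v(\tau)\cdot\nu(\tau)>0$, which is possible and continuous because the $\mu^{kl}$-eigenline is nowhere parallel to $\dot x_0(\tau)$ and therefore never orthogonal to $\nu(\tau)$, one obtains a $T_0$-periodic $v(\tau)$ whose angular deviation from $\nu(\tau)$ stays in $(-\pi/2,\pi/2)$. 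Consequently $v$ has the same total rotation as $\nu$, namely $+2\pi$, so the winding is $+1$ and $\deg(I-\Omega(T,0,\cdot),W_\gamma(U))=1$ for both signs of $\gamma$.

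The main obstacle will be bounding the $O(\gamma^2)$ remainder uniformly on $\partial W_\gamma(U)$ so that the straight-line homotopy to the leading term is admissible for all $0<|\gamma|<\gamma_1$, and pinning down the orientation of $v(\tau)$ with enough care that the winding-number calculation produces $+1$ rather than $-1$.
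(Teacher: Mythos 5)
Your argument is essentially correct, but it follows a genuinely different route from the paper. The paper does no linearization at all: it first observes that (A$_0$) forces the cycle to be asymptotically stable or unstable, so $\xi\neq\Omega(T,0,\xi)$ on $\partial W_\gamma(U)$ for small $|\gamma|\neq 0$; it then invokes Corollary~2 of \cite{CMZ} to convert ${\rm deg}(I-\Omega(T,0,\cdot),W_\gamma(U))$ into ${\rm deg}(\psi,W_\gamma(U))$, excises to get ${\rm deg}(\psi,U)$, and finishes with the classical fact (Theorem~2.3 of \cite{KPPZ}) that a planar field tangent to a Jordan curve at every boundary point has degree $1$ on its interior. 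Your approach instead computes the degree by hand as the boundary winding of the leading term $\gamma(1-\mu^{kl})b(\tau)v(\tau)$ of the Taylor expansion, reducing it to the rotation of the transversal eigenvector field $v$, which you correctly pin to that of the outer normal via the normalization $v\cdot\nu>0$. What the paper's route buys is brevity and no need to discuss eigenvector continuity or orientations; what yours buys is self-containedness and an explicit quantitative lower bound $|\gamma|\,|1-\mu^{kl}|\min_\tau|b(\tau)|-O(\gamma^2)$ for $\|\xi-\Omega(T,0,\xi)\|$ on $\partial W_\gamma(U)$, which is in the spirit of the paper's explicit constants $K_\gamma$.

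Two points in your write-up need shoring up. First, the inference ``$\mu\neq\pm 1$ hence $\mu^{kl}\neq 1$'' is false for a general complex multiplier (a primitive $kl$-th root of unity is a counterexample); it is saved here only because in the plane the product of the two multipliers equals $\exp\int_0^{T_0}{\rm tr}\,\psi'(x_0(s))\,ds>0$ and one of them is $1$, so $\mu$ is real and positive and (A$_0$) amounts to $\mu\neq 1$. You should say this explicitly, since it is also what guarantees that the $\mu^{kl}$-eigenline is everywhere distinct from the span of $\dot x_0(\tau)$. Second, your claim that the total rotation of $\nu$ is $+2\pi$ presumes that $x_0$ traverses $\partial U$ counterclockwise; if it runs clockwise the winding along your parametrization is $-1$, but the identification of the degree with the boundary winding then also carries a sign reversal, so the two cancel and the degree is $+1$ in either case. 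State this orientation bookkeeping, otherwise the final step reads as if it could produce $-1$.
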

\begin{proof}
Condition (A$_0$) ensures that the characteristic multiplier of
 $\Omega'_{(3)}(T,0,x(0))$ different from $+1$ is not equal to
 $-1.$ Therefore the limit cycle $x_0$ is either asymptotically
 stable or unstable, thus in a sufficiently small neighborhood of
 $\tilde x_0$ the map $\xi \to \Omega(T,0,\xi)$ does not have fixed
 points different from those belonging to $\tilde x_0$. In particular,
\begin{equation}\label{ooo}
\xi \neq \Omega(T,0,\xi)\quad{\rm for\ any\ } \xi \in
\partial W_{\gamma}(U)
\end{equation}
and $|\gamma|\neq 0$ sufficiently small. Hence by Corollary~2 of
\cite{CMZ} we have that
$$
{\rm deg}(I-\Omega(T,0,\cdot), W_\gamma(U))={\rm deg}(\psi,
W_\gamma(U)),
$$
On the other hand from (\ref{ooo}) we have that
$$
{\rm deg}(\psi, W_\gamma(U))={\rm deg}(\psi, U)
$$
and since the vector field $\psi$ is tangent to $\partial U$ at
any point, we get (see, for example, Theorem 2.3 of \cite{KPPZ})
$$
{\rm deg}(\psi, U)=1.
$$
 \end{proof}
The following result is a direct consequence of Lemma 2.
\begin{corollary}\label{cor1}
Under condition (A$_0$) we have that
\begin{equation}\label{cor2}
  {\rm deg}(I-\Omega(T,0,\cdot),W_\gamma(U))=1, \; \mbox{for every}\;
  -\gamma_0<\gamma<\gamma_0,\ \gamma\not=0.
\end{equation}
\end{corollary}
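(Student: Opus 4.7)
The plan is to extend the conclusion of Lemma~\ref{le2} from the a priori small radius $\gamma_1$ produced by that lemma to the explicit radius $\gamma_0$ of Definition~\ref{def1}, by means of the excision property of the topological degree with respect to the domain. One may assume $\gamma_1\le\gamma_0$, so that Lemma~\ref{le2} already yields ${\rm deg}(I-\Omega(T,0,\cdot),W_\gamma(U))=1$ for $0<|\gamma|<\gamma_1$.

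Fix $\gamma$ with $0<|\gamma|<\gamma_0$ of the same sign as some $\gamma'$ with $0<|\gamma'|<\gamma_1$. The family $\{W_s(U)\}$ is monotone in $s$ along a segment of constant sign, so excision applied to the pair $W_{\gamma'}(U)\subset W_\gamma(U)$ (if $\gamma>0$) or $W_\gamma(U)\subset W_{\gamma'}(U)$ (if $\gamma<0$) reduces the claim to verifying that $\xi\mapsto\xi-\Omega(T,0,\xi)$ has no zeros in the shell between them. This shell sits inside $B_{\gamma_0}(\partial U)\cup B_{-\gamma_0}(\partial U)$, so it is enough to show that no $T$-periodic trajectory of (\ref{1}) has initial condition in that pair of annuli.

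The defining property of $\gamma_0$ rules out $T_0$-periodic initial data there, but here $T=klT_0$, and the obstacle is to bridge this gap. Condition ($A_0$) ensures, exactly as in the proof of Lemma~\ref{le2}, that $\tilde x_0$ is an isolated periodic orbit: the nontrivial Floquet multiplier is real and different from both $\pm1$. Combining this with the planar-autonomous structure and the continuity of the first-return time along a transversal section to $\tilde x_0$, any periodic orbit sufficiently close to $\tilde x_0$ must have minimal period equal to $T_0$; hence no $T$-periodic orbit other than $\tilde x_0$ itself can start in $B_{\pm\gamma_0}(\partial U)$, after possibly shrinking $\gamma_0$ to accommodate the smallness required by this continuity argument. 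Once this is established, excision gives ${\rm deg}(I-\Omega(T,0,\cdot),W_\gamma(U))={\rm deg}(I-\Omega(T,0,\cdot),W_{\gamma'}(U))=1$, which is (\ref{cor2}). The main obstacle is precisely this passage from $T_0$-periodicity to $T$-periodicity in the annuli, the only nontrivial step beyond Lemma~\ref{le2}.
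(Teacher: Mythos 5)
Your argument follows the paper's own proof essentially step for step: Lemma~\ref{le2} supplies the degree for one small radius of each sign, and excision (equivalently, invariance of the degree when the domain is enlarged through a region where $I-\Omega(T,0,\cdot)$ does not vanish) transports it to every $\gamma$ with $0<|\gamma|<\gamma_0$; the paper does exactly this, merely asserting that the required non-degeneracy on $\partial W_\gamma(U)$ ``follows from Definition~\ref{def1}.'' The one point where you diverge is the point you yourself flag: Definition~\ref{def1} literally excludes only $T_0$-periodic solutions from the annuli, whereas a zero of $I-\Omega(T,0,\cdot)$ with $T=klT_0$ is a priori only a $T$-periodic point, so a closed orbit of minimal period $T_0/n$ or $nT_0$ ($n>1$) meeting the shell would not be ruled out by the definition as written. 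The paper silently identifies the two notions; you patch the gap via hyperbolicity of the cycle, but at the cost of ``possibly shrinking $\gamma_0$,'' which strictly speaking proves the conclusion only for a smaller radius than the one appearing in the statement (and reused in Theorem~\ref{th1}). The intended reading is that $\gamma_0$ already excludes from $B_{\pm\gamma_0}(\partial U)$ all closed orbits of (\ref{1}) other than $\tilde x_0$ --- equivalently all nontrivial fixed points of $\Omega(T,0,\cdot)$ --- in which case no shrinking is needed and your excision argument closes as written. So: same route as the paper, correct modulo a genuine imprecision in Definition~\ref{def1} that you handle more explicitly (if slightly less faithfully to the stated constant) than the authors do.
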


\begin{proof}
First we prove (\ref{cor2}) for $0<\gamma<\gamma_0$. By lemma
\ref{le2} there exists $0<\gamma_*<\gamma_0$  such that ${\rm
deg}(I-\Omega(T,0,\cdot),W_{\gamma_*}(U))=1.$ From definition~1 it
follows that the constant $\gamma_0$ has the property that the
vector field $I-\Omega(T,0,\cdot)$ is not degenerated on the
boundary of the set $W_\gamma(U)$ for any $0<\gamma<\gamma_0.$
Therefore, for any $0<\gamma<\gamma_0,$ we have
$$
 {\rm deg}(I-\Omega(T,0,\cdot),W_\gamma(U))=
 {\rm deg}(I-\Omega(T,0,\cdot),W_{\gamma_*}(U))=1.
$$
The same arguments apply for $-\gamma_0<\gamma<0.$

\end{proof}

\noindent We are now in the position to prove theorem 1.

\vspace{2mm} \noindent
\begin{proof}
Denote by $C([0,T],{\rm R}^2)$ the Banach space of all the
continuous functions defined on the interval $[0,T]$ with values
in ${\rm R}^2,$ equipped with the sup-norm. Consider in (\ref{2})
the change of variable
\begin{equation}\label{zpn}
  x(t)=\Omega(t,0,z(t)).
\end{equation}
For every $z \in C([0,T],{\rm R}^2),$ (\ref{zpn}) defines uniquely
$x \in C([0,T],{\rm R}^2)$ with inverse given by
\begin{equation}\label{zp_}
  z(t)=\Omega(0,t,x(t)),\ t \in [0,T].
\end{equation}
Therefore, the function $x$ is the solution of the system
(\ref{2}) if and only if the function $z$ defined by (\ref{zp_})
satisfies the differential equation
\begin{eqnarray}\label{f9}
  \Omega'_{(1)}(t,0,z(t))+\Omega'_{(3)}(t,0,z(t))\, \dot z(t)=
  \varepsilon \phi(t,\Omega(t,0,z(t)))+ \psi(\Omega(t,0,z(t))).
\end{eqnarray}
By the definition of $\Omega(t,0,z(t))$ we have
\begin{equation}\label{f11}
  \Omega'_{(1)}(t,0,z(t))=\psi(\Omega(t,0,z(t))).
\end{equation}
Moreover, by using (\ref{f12}) and (\ref{f11}) we can rewrite
system (\ref{f9}) in the following form
\begin{equation}\label{1_}
  \dot z(t)=\varepsilon\, \Phi(t,z(t)),
\end{equation}
where $\Phi$ is defined by (\ref{dp}). Observe that (\ref{zpn})
and  (\ref{zp_}) define a homeomorphism between the solutions of
systems (\ref{2}) and  (\ref{1_}).

\noindent Consider an arbitrary $T$-periodic solution $x$ of
system (\ref{2}), we have
$$
  z(0)=\Omega(0,0,x(0))=x(0)=x(T)=\Omega(T,0,z(T)).
$$
Therefore the problem of the existence of $T$-periodic solutions
to system (\ref{2}) is equivalent to the problem of the existence
of zeros of the compact vector field $G_\varepsilon : C([0,T],{\rm
R}^2)\to C([0,T],{\rm R}^2)$ defined by
$$
G_\varepsilon (z)(t)=z(t)-\Omega(T,0,z(T))-\varepsilon\int
\limits_0^t\Phi(\tau,z(\tau))d\tau, \quad t\in[0,T].
$$
Define the set
\begin{equation}\label{defZ}
  Z=\left\{z\in {\rm C}([0,T],{\rm R}^2):\ z(t)\in B_\gamma(\partial U)\;
  \mbox{for any}\; t\in[0,T]\right\}.
\end{equation}
Consider the auxiliary compact vector field
$$
G_{1,\varepsilon}=I-A_{\varepsilon}: C([0,T],{\rm R}^2)\to
C([0,T],{\rm R}^2),
$$
where
$$
A_{\varepsilon}(z)(t)=\Omega(T,0,z(T))-
    \varepsilon\int \limits_0^{T}\Phi(\tau,z(\tau))d\tau, \quad
    t\in[0,T],
$$
Let us show that for any $\varepsilon>0$ satisfying (\ref{est})
the compact vector fields $G_\varepsilon$ and $G_{1,\varepsilon}$
are homotopic on the boundary of the set $Z.$ For this, define the
following homotopy $F_\varepsilon : [0,1]\times C([0,T],{\rm
R}^2)\to C([0,T],{\rm R}^2)$ joining the vector fields
$G_\varepsilon$ and $G_{1,\varepsilon}$:
$$
F_{\varepsilon}(\lambda,z)(t)=z(t)-\Omega(T,0,z(T))-\varepsilon\int
\limits_0^{\alpha(\lambda, t)}\Phi(\tau,z(\tau))d\tau, \quad
    t\in[0,T],
$$
where $\alpha(\lambda, t)=\lambda t+(1-\lambda)T$. Let us show
that for any $\varepsilon>0$ satisfying (\ref{est}) the homotopy
$F_\varepsilon$ does not vanish on the boundary of the set $Z.$
Assume the contrary, thus for some $\varepsilon>0$ satisfying
(\ref{est}) there exist $z_\varepsilon\in\partial Z$ and
$\lambda_\varepsilon\in[0,1]$ such that
\begin{equation}\label{f15}
  z_\varepsilon(t)=\Omega(T,0,z_\varepsilon(T))+\varepsilon\int
\limits_0^{\alpha(\lambda_\varepsilon, t)}
\Phi(\tau,z_\varepsilon(\tau))d\tau,\quad t\in[0,T].
\end{equation}
From the fact that $z_\varepsilon\in\partial Z$ it follows the
existence of $t_\varepsilon\in[0,T]$ such that
$z_\varepsilon(t_\varepsilon)\in\partial \left( B_\gamma(\partial
U) \right).$
By the definition of the set $B_\gamma(\partial U)$ either
\begin{equation}
\label{case1} z_\varepsilon(t_\varepsilon)\in\partial W_\gamma(U),
\end{equation}
or
\begin{equation}\label{case2}
z_\varepsilon(t_\varepsilon)\in\partial U.
\end{equation}
By using (\ref{est}) we have the following estimate
\begin{eqnarray}
& &
\left\|z_\varepsilon(t_\varepsilon)-\Omega(T,0,z_\varepsilon(t_\varepsilon))
\right\|  = \nonumber\\
& &
\left\|z_\varepsilon(t_\varepsilon)-\Omega(T,0,z_\varepsilon(T))+
 \Omega(T,0,z_\varepsilon(T))-\Omega(T,0,z_\varepsilon(t_\varepsilon))\right\|=
 \nonumber\\
& & =
\left\|\varepsilon\int\limits_0^{\alpha(\lambda_\varepsilon,t_\varepsilon)}
\Phi(\tau,z_\varepsilon(\tau))d\tau
  + \Omega(T,0,z_\varepsilon(T))-\Omega(T,0,z_\varepsilon(t_\varepsilon))
  \right\|\le\nonumber\\
& & \le \varepsilon T M_\gamma + \varepsilon L_\gamma' T
M_\gamma<K_\gamma,
\end{eqnarray}
which contradicts the definition of the constant $K_\gamma$ in the
case when (\ref{case1}) holds true. Thus (\ref{case1}) cannot
occur. Assume now (\ref{case2}), by lemma 1 and the fact that, in
this case, $z_\varepsilon(t_\varepsilon)=
\Omega(T,0,z_\varepsilon(t_\varepsilon))$ we have
\begin{eqnarray}
  & & \hskip-0.7cm \eta(T,\alpha(\lambda_\varepsilon,t_\varepsilon),
  z_\varepsilon(t_\varepsilon))-
  \eta(0,\alpha(\lambda_\varepsilon,t_\varepsilon),z_\varepsilon(t_\varepsilon))=
  \nonumber\\
  & & \hskip-0.7cm
=\left(\Omega'_{(3)}(T,0,z_\varepsilon(t_\varepsilon))-I\right)\int\limits_
{\alpha(\lambda_\varepsilon,t_\varepsilon)}^{T}\Phi(\tau,z_\varepsilon(t_\varepsilon))
d\tau+\int\limits_0^{T
}\Phi(\tau,z_\varepsilon(t_\varepsilon))d\tau=\nonumber\\
  & & \hskip-0.7cm =
\frac{z_\varepsilon(T)-z_\varepsilon(t_\varepsilon)}{\varepsilon}-\frac{
\Omega(T,0,z_\varepsilon(T))-\Omega(T,0,z_\varepsilon(t_\varepsilon))}
{\varepsilon}-\int\limits_0^{T}\Phi(\tau,z_\varepsilon(\tau))d\tau-\nonumber\\
  & & \hskip-0.7cm -
\left(I-\Omega'_{(3)}(T,0,z_\varepsilon(t_\varepsilon))\right)\int\limits_
{\alpha(\lambda_\varepsilon,t_\varepsilon)}^{T}\Phi(\tau,z_\varepsilon(t_\varepsilon))
d\tau+
\int\limits_0^{T }\Phi(\tau,z_\varepsilon(t_\varepsilon))d\tau =\nonumber\\
  & & \hskip-0.7cm
=\int\limits_{\alpha(\lambda_\varepsilon,t_\varepsilon)}^{T}\Phi(\tau,z_\varepsilon(\tau))
d\tau-\frac{
1}{\varepsilon}\left(\begin{array}{c}[\Omega'_{(3)}(T,0,\xi_{1\varepsilon})
(z_\varepsilon(T)-z_\varepsilon(t_\varepsilon))]_1\\

[\Omega'_{(3)}(T,0,\xi_{2\varepsilon})(z_\varepsilon(T)-z_\varepsilon(t_\varepsilon))]_2
  \end{array}\right)-\nonumber\\
  & & \hskip-0.7cm -
\left(I-\Omega'_{(3)}(T,0,z_\varepsilon(t_\varepsilon))\right)
\int\limits_{\alpha(\lambda_\varepsilon,t_\varepsilon)}^{T}\Phi(\tau,z_\varepsilon(t_\varepsilon))
d\tau
+\int\limits_0^{T}\Phi(\tau,z_\varepsilon(t_\varepsilon))d\tau-\nonumber\\
  & & \hskip-0.7cm
- \int\limits_0^{T}\Phi(\tau,z_\varepsilon(\tau))d\tau =
\int\limits_0^{\alpha(\lambda_\varepsilon,t_\varepsilon)}\Phi(\tau,z_\varepsilon(t_\varepsilon))d\tau-
\int\limits_{0}^{\alpha(\lambda_\varepsilon,t_\varepsilon)}\Phi(\tau,z_\varepsilon(\tau))d\tau-\nonumber\\
    & & \hskip-0.7cm-
\left(\begin{array}{c}[\Omega'_{(3)}(T,0,\xi_{1\varepsilon})\int
\limits_{\alpha(\lambda_\varepsilon,t_\varepsilon)}^{T}\Phi(\tau,z_\varepsilon(\tau))d\tau]_1\\

[\Omega'_{(3)}(T,0,\xi_{2\varepsilon})\int\limits_{\alpha(\lambda_\varepsilon,t_\varepsilon)}^{T}
\Phi(\tau,z_\varepsilon(\tau))d\tau]_2
\end{array}\right)+\nonumber\\
  & & \hskip-0.7cm
+\left(\begin{array}{c}[\Omega'_{(3)}(T,0,z_\varepsilon(t
_\varepsilon))\int\limits_{\alpha(\lambda_\varepsilon,t_\varepsilon)}^{T}\Phi(\tau,z_\varepsilon(\tau))d\tau]_1\\

[\Omega'_{(3)}(T,0,z_\varepsilon(t_\varepsilon))\int\limits_{\alpha(\lambda_\varepsilon,t_\varepsilon)}
^{T}\Phi(\tau,z_\varepsilon(\tau))d\tau]_2
\end{array}\right)-\nonumber\\
    & & \hskip-0.7cm-
\Omega'_{(3)}(T,0,z_\varepsilon(t_\varepsilon))\int\limits_{\alpha(\lambda_\varepsilon,t_\varepsilon)}^{T}
\Phi(\tau,z_\varepsilon(\tau))d\tau
+\,\Omega'_{(3)}(T,0,z_\varepsilon(t_\varepsilon))\int\limits_{\alpha(\lambda_\varepsilon,t_\varepsilon)}^{T}
\Phi(\tau,z_\varepsilon(t_\varepsilon))d\tau  ,\label{newr1}
\end{eqnarray}
where
$\xi_{i\varepsilon}\in[z_\varepsilon(t_\varepsilon),z_\varepsilon(T)]_i$
and $[v]_i, i=1,2,$ denotes the $i$-$th$ component of the vector
$v$. By using the following estimates
$$
\|z_\varepsilon(t_\varepsilon)-\xi_{i\varepsilon}\|\le
\|z_\varepsilon(t_\varepsilon)-z_\varepsilon(T)\|\le\left\|
\varepsilon\int\limits_{\alpha(\lambda_\varepsilon,t_\varepsilon)}^{T}\Phi(
\tau,z_\varepsilon(\tau))d\tau\right\|\le\varepsilon
  T M_\gamma \qquad \mbox{and}
$$
$$
\max_{t\in[0,T]}\left\|\Phi(t,z_\varepsilon(t_\varepsilon))-\Phi(t,z_\varepsilon
(t))\right\|\le M'_\gamma
\max_{t\in[0,T]}\|z_\varepsilon(t_\varepsilon)-z_\varepsilon(t)\|\le
\varepsilon T M_\gamma M'_\gamma $$ together with (\ref{est}) we
obtain from (\ref{newr1})
\begin{eqnarray}
& &
\left\|\eta(T,\alpha(\lambda_\varepsilon,t_\varepsilon),z_\varepsilon(t_\varepsilon))-
\eta(0,\alpha(\lambda_\varepsilon,t_\varepsilon),z_\varepsilon(t_\varepsilon))\right\|\le\nonumber\\
& & \le\varepsilon T^2 M_\gamma M'_\gamma
+\varepsilon\sqrt{2}\hskip0.1cm T^2(M_\gamma)^2
L_\gamma''+\varepsilon T^2 M_\gamma M_\gamma'
L_\gamma'=\nonumber\\
& & = \varepsilon T^2 M_\gamma\left(M'_\gamma+\sqrt{2} M_\gamma
L_\gamma''+M'_\gamma L_\gamma'\right)<K_0,
\end{eqnarray}
which contradicts the definition of the constant $K_0.$ Hence both
(\ref{case1}) and (\ref{case2}) cannot occur and so for
$\varepsilon\in(0,\varepsilon_\gamma)$ the homotopy
$F_\varepsilon$ does not vanish on the boundary of the set $Z$ and
so the compact vector fields $G_\varepsilon$ and
$G_{1,\varepsilon}$ are homotopic on the boundary of the set $Z.$
In particular,
\begin{equation}\label{inpart}
G_{1,\varepsilon}(z)\not= 0,\  \mbox{for any} \ z \in
\partial\left(Z\bigcap C_{const}([0,T],{\rm R}^2)\right)\subset\partial Z,
\ \mbox{and any} \ \varepsilon\in(0,\varepsilon_\gamma),
\end{equation}
where $C_{const}([0,T],{\rm R}^2)$ denotes the subspace of the
space $C([0,T],{\rm R}^2)$ consisting of all the constant
functions defined on the interval $[0,T]$ with values in ${\rm
R}^2$ and $\partial\left(Z\bigcap C_{const}([0,T],{\rm
R}^2)\right)$ is the relative boundary of the set $Z\bigcap
C_{const}([0,T],{\rm R}^2)$ with respect to the subspace
$C_{const}([0,T],{\rm R}^2).$
Therefore, by the reduction domain property of the topological
degree, taking into account that $A_{\varepsilon}(\partial
Z)\subset C_{const}([0,T],{\rm R}^2),$ we obtain
\begin{equation} \label{vr1}
  {\rm deg}_{C([0,T],{\rm R}^2)}(G_{1,\varepsilon},Z)=
    {\rm deg}_{C_{const}([0,T],{\rm R}^2)}\left(G_{1,\varepsilon},
       Z\bigcap C_{const}([0,T],{\rm R}^2)\right),
\end{equation}
whenever $\varepsilon\in(0,\varepsilon_\gamma).$ Observe, that
$z\in Z\bigcap C_{const}([0,T],{\rm R}^2)$ is a solution of the
equation $G_{1,\varepsilon}z=0$ if and only if $\xi=z$ is a
solution of the equation $Q_{\varepsilon}\xi=0,$ where
$Q_\varepsilon: {\rm R}^2\to {\rm R}^2$ is defined by
$$
Q_{\varepsilon}\xi=\xi-\Omega(T,0,\xi)-\varepsilon\int
\limits_0^{T}\Phi(\tau,\xi)d\tau.
$$
Therefore, we have
\begin{equation}\label{vr2}
  \begin{array}{c}
    {\rm deg}_{C_{const}([0,T],{\rm R}^2)}(G_{1,\varepsilon},Z\bigcap C_{const}([0,T],{\rm
R}^2))=
      {\rm deg}_{{\rm R}^2}(Q_{\varepsilon},B_\gamma(\partial U))=
      \\\\
     = {\rm sign}(\gamma)\left({\rm deg}_{{\rm R}^2}(Q_\varepsilon,W_\gamma(U))-{\rm deg}_{{\rm
     R}^2}(Q_\varepsilon,U)\right),\ \ \mbox{for any} \ \varepsilon\in(0,\varepsilon_\gamma).
  \end{array}
\end{equation}
From (\ref{est}), for any $\varepsilon\in(0,\varepsilon_\gamma),$
we have
$$
 \min_{\xi\in\partial  W_\gamma(U)}\|\xi-\Omega(T,0,\xi)\|=K_\gamma>
  \varepsilon T M_\gamma\ge \varepsilon
    \max_{\xi\in B_\gamma(\partial U)}
      \left\|\int\limits_0^T\Phi(\tau,\xi)d\tau\right\|
$$
and so
$$
  {\rm deg}_{{\rm R}^2}(Q_\varepsilon,W_\gamma(U))={\rm deg}_{{\rm R}^2}(Q_0,W_\gamma(U)),
   \quad \mbox{whenever} \quad \varepsilon\in(0,\varepsilon_\gamma).
$$
By corollary 1 and the previous equality we obtain
\begin{equation}\label{222}
  {\rm deg}_{{\rm R}^2}(Q_\varepsilon,W_\gamma(U))=1,
   \quad \mbox{whenever} \quad \varepsilon\in(0,\varepsilon_\gamma).
\end{equation}
Let us now calculate ${\rm deg}_{{\rm R}^2}(Q_\varepsilon,U).$ For
this, let $Q_{1,\varepsilon}: {\rm R}^2\to {\rm R}^2$ be defined
by
$$
Q_{1,\varepsilon}\xi=-\varepsilon\int
\limits_0^{T}\Phi(\tau,\xi)d\tau,\quad \mbox{for any}\quad \xi\in
Z\bigcap C_{const}([0,T],{\rm R}^2).
$$
From (A$_2$) we have that
$Q_{\varepsilon}\xi=Q_{1,\varepsilon}\xi,\ $ for any
$\xi\in\partial U,$ since $\xi=\Omega(T,0,\xi)$, and so for
$\varepsilon\in(0,\varepsilon_\gamma)$ we have
\begin{equation}\label{vr3}
{\rm deg}_{{\rm R}^2}(Q_{\varepsilon},U)= {\rm deg}_{{\rm
R}^2}(Q_{1,\varepsilon},U).
\end{equation}
Let us show that the compact vector fields $Q_{1,\varepsilon}$ and
$Q_{1,1}$ are homotopic on the boundary of the set $U$ for
$\varepsilon\in(0,\varepsilon_\gamma).$ For this, define the
linear homotopy $F_{1,\varepsilon}: [0,1]\times {\rm R}^2\to {\rm
R}^2$ as follows
$$
F_{1,\varepsilon}(\lambda,\xi)=-(\lambda\varepsilon+1-\lambda)\int
\limits_0^{T}\Phi(\tau,\xi)d\tau.
$$
Arguing by contradiction, assume that for some
$\hat\lambda\in[0,1],\ $ $\hat\xi\in\partial U$ and
$\hat\varepsilon\in(0,\varepsilon_\gamma)$ we have
$$
(\hat\lambda\hat\varepsilon+1-\hat\lambda)\int
\limits_0^{T}\Phi(\tau,\hat\xi)d\tau=0,
$$
and so
$$
\int \limits_0^{T}\Phi(\tau,\hat\xi)d\tau=0.
$$
By lemma 1 this is a contradiction with condition (A$_2$). Thus,
again by lemma 1
\begin{equation}\label{vr4}
{\rm deg}_{{\rm R}^2}(Q_{1,\varepsilon},U)= {\rm deg}_{{\rm
R}^2}(Q_{1,1},U)={\rm deg}_{{\rm R}^2}(\eta(0,T,\cdot),U).
\end{equation}
From (\ref{vr3}) and (\ref{vr4}) we obtain
\begin{equation}\label{res_}
  {\rm deg}_{{\rm R}^2}(Q_\varepsilon,U)=
    {\rm deg}_{{\rm R}^2}(\eta(0,T,\cdot),U),\ \mbox{for any} \ \varepsilon\in(0,\varepsilon_\gamma).
\end{equation}
Finally, taking into account (\ref{vr1})-(\ref{222}), (\ref{res_})
and condition (A$_3$) we get
\begin{eqnarray}
& & {\rm deg}_{C([0,T],{\rm R}^2)}(G_\varepsilon,Z)= {\rm
deg}_{C([0,T],{\rm R}^2)}(G_{1,\varepsilon},Z)= \nonumber\\
& & = {\rm sign}(\gamma)\left(1-{\rm deg}_{{\rm
R}^2}(\eta(0,T,\cdot),U)\right)\not=0,\nonumber
\end{eqnarray}
for any $\varepsilon\in(0,\varepsilon_\gamma)$. The solution
property of the topological degree ends the proof.
\end{proof}

\noindent {\bf Remark 1.}  Observe that ${\rm
deg}(\eta(0,T,\cdot),U)$ can be calculated by means of a well
known formula (see e. g. \cite{KZ}, p. 6). In this case condition
($A_3$) takes the form
\begin{eqnarray}
  {\rm deg}(\eta(0,T,\cdot),U) & = & \sum_{i=0}^{n-1}{\rm
  sign}([\eta(T,0,x_0(\cdot))]_1,\theta_{i+1},\theta_i)\cdot \nonumber \\
  & & \cdot ({\rm
  sign}[\eta(T,0,x_0(\theta_{i+1}))]_2-
  {\rm
  sign}[\eta(T,0,x_0(\theta_{i}))]_2)\not=1,\nonumber
\end{eqnarray}
where $[\cdot]_j,$ $j=1,2,$ denotes the $j-th$ component of the
vector field $\eta$ and $\theta_i,$ $i=0,1,...,n-1,$
$\theta_n=\theta_0,$ are the ordered roots of function
$\theta\mapsto[\eta(0,T,x_0(\theta))]_1.$

\noindent Conditions for the existence of solutions to (\ref{2})
in terms of the derivatives of the function
$[\eta(0,T,x_0(\cdot))]_1$ at the points $\theta_i$, up to a
suitable rotation, are given in \cite{L1}.

\

\noindent We now prove the following result.

\begin{theorem}\label{th2} Assume that there exists $\xi\in\tilde
x_0$ such that $T_1>0$ is the least period of the function $t \to
\phi(t,\xi).$ Assume condition ($A_1$), if
$\left\{x_\varepsilon\right\}_{\varepsilon\in(0,\varepsilon_*)}$
are $\hat T-$periodic solutions to (\ref{2}) converging to $\tilde
x_0$ as $\varepsilon\to 0$, then $klT_0 \le \hat T.$
\end{theorem}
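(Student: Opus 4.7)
The plan is to extract arithmetic constraints on $\hat T$ from the limit $\varepsilon\to 0$.

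First, by continuous dependence applied along a subsequence on which $x_\varepsilon(0)\to\xi_0\in\tilde x_0$, one has $x_\varepsilon(t)\to y(t):=\Omega(t,0,\xi_0)$ uniformly on $[0,\hat T]$. The $\hat T$-periodicity passes to $y(\hat T)=y(0)$; since the autonomous orbit through $\xi_0\in\tilde x_0$ has least period $T_0$, this forces $\hat T\in T_0\N$.

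Second, differentiating $x_\varepsilon(t+\hat T)=x_\varepsilon(t)$ and using that $\psi$ is autonomous gives $\phi(t+\hat T,x_\varepsilon(t))=\phi(t,x_\varepsilon(t))$ for every $t$; letting $\varepsilon\to 0$ yields $\phi(t+\hat T,y(t))=\phi(t,y(t))$. Picking $s_0\in[0,T_0)$ with $y(s_0)=\xi$, setting $g(t):=\phi(t,\xi)$, and writing $\hat T=qT_1+r$ with $0\le r<T_1$, the $T_1$-periodicity of $g$ reduces the identity to $g(t+r)=g(t)$ on $s_0+T_0\Z$, which then extends to $s_0+(T_0\Z+T_1\Z)=s_0+(T_1/k)\Z$ by $T_1$-periodicity of the difference $g-g(\cdot+r)$ (the last equality using $\gcd(l,k)=1$, from ($A_1$)).

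Third, I plan to use the least-period hypothesis on $g$ to conclude $r=0$. Expanding $g$ in Fourier series, the fact that $T_1$ is the least period means that the gcd of the indices of the nonzero coefficients equals $1$; the vanishing of the $T_1$-periodic difference $g-g(\cdot+r)$ at the $k$ equally spaced residues of the lattice modulo $T_1$ imposes a system of $k$ linear relations on the Fourier side, and I would argue that these force $e^{2\pi imr/T_1}=1$ for every $m$ in the Fourier support, hence $r=0$. This step uses in an essential way that the hypothesis locates $T_1$ as the least period at the specific point $\xi\in\tilde x_0$. Consequently $T_1\mid\hat T$, and combined with $T_0\mid\hat T$ this gives ${\rm lcm}(T_0,T_1)=kT_0=lT_1$ divides $\hat T$, so $\hat T\ge kT_0$.

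The main obstacle is the remaining factor $l$. Writing $\hat T=pkT_0$ with $p\in\N$, one must show $p\ge l$. I would attempt this through a rigidity argument on the time-$\hat T$ return map near $\tilde x_0$: the limit curve $\{(t\bmod T_1,y(t))\}$ on the torus $(\R/T_1\Z)\times\tilde x_0$ is an $(l,k)$-torus knot, so in each period of length $kT_0=lT_1$ the solution completes $k$ loops of $\tilde x_0$ against $l$ cycles of the perturbation. The bifurcation equation for fixed points of $\Omega_\varepsilon(\hat T,0,\cdot)$ near $\tilde x_0$ (the analogue of condition ($A_3$) of Theorem~\ref{th1} with $T$ replaced by $\hat T$) should acquire a nontrivial topological degree only after enough threadings of this winding curve, and the arithmetic of the winding numbers $(l,k)$, together with the least-period hypothesis at $\xi$, should force $p\ge l$. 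This final step, combining the integral representation of $\eta$ in Lemma~\ref{le1} with the homotopy and degree arguments of Theorem~\ref{th1}, is where I expect the bulk of the technical work to lie.
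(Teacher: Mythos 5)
Your first two steps track the paper's own argument: continuous dependence gives $\hat T\in T_0{\rm N}$, and differentiating the periodicity relation gives $\phi(t+\hat T,x_\varepsilon(t))=\phi(t,x_\varepsilon(t))$, which is then passed to the limit and evaluated at the distinguished point $\xi$. From there, however, both of your remaining steps have genuine gaps. The Fourier argument of your third step does not close: the identity $g(t+r)=g(t)$ is available only on the coset $s_0+(T_1/k){\rm Z}$, i.e.\ at $k$ equally spaced points per period, and the resulting $k$ linear relations annihilate only the sums of the quantities $(e^{2\pi imr/T_1}-1)\hat g_m e^{2\pi ims_0/T_1}$ over each residue class of $m$ modulo $k$; they do not force each summand to vanish. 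For instance $g(t)=\sin(2\pi t/T_1)$ (least period $T_1$), $s_0=0$, $k=2$, $r=T_1/2$ satisfies all the relations with $r\neq0$. So the conclusion $T_1\mid\hat T$ does not follow by this route. The paper instead passes to the limit in $\phi(t_0,x_\varepsilon(t_0))=\phi(t_0+\tilde T,x_\varepsilon(t_0))$ for arbitrary $t_0$ and invokes the least-period hypothesis at $\xi$ directly to get $\tilde T=n_1T_1$, without restricting to the lattice of times at which the limit orbit passes through $\xi$ --- the very restriction you (rightly) worry about.

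Your fourth step is a program, not a proof, and it should be said plainly that the difficulty you locate there is real: from $T_0\mid\hat T$ and $T_1\mid\hat T$ one obtains only $\hat T\ge{\rm lcm}(T_0,T_1)=kT_0=lT_1$, not $\hat T\ge klT_0$. The paper's proof extracts the factor $l$ purely arithmetically, asserting that $n_0T_0=n_1T_1$ together with $(A_1)$ yields $n_0=kln_2$; but the relation $n_0l=n_1k$ with $l,k$ coprime gives only $k\mid n_0$ (indeed $n_0=k$, $n_1=l$ solves it), so no dynamical input of the torus-knot/return-map kind you sketch appears in the paper, and your outline of that argument is too vague to assess --- you yourself defer ``the bulk of the technical work'' to it. In summary, the proposal is incomplete at exactly the two points where it must go beyond the divisibility facts: the passage from vanishing on a finite lattice to $T_1\mid\hat T$, and the passage from $\hat T\ge kT_0$ to $\hat T\ge klT_0$. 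Neither is established.
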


\begin{proof}
Assume the contrary, thus there exists $\tilde T\in(0,klT_0)$ such
that for every $\varepsilon>0$ sufficiently small system (\ref{2})
has a $\tilde T$-periodic solution $x_\varepsilon$ satisfying
$$
\lim_{\varepsilon\to 0}\ x_\varepsilon(t)=x_0(t+w_0),
$$
for some $w_0\in[0,T_0]$, thus $\tilde T$ is a period of the
function $x_0(t).$ On the other hand, $T_0$ is the least period of
$x_0(t)$ and so we obtain $\tilde T=n_0 T_0$ for some $n_0\in{\rm
N}.$ Moreover, in (\ref{2}) we have that the function $t\to
\phi(t,x_\varepsilon(t))$ is $\tilde T$-periodic, since $\dot
x(t)$ is $\tilde T$-periodic, and so for any $t_0\in[0,T],$ we
have
$$
\phi(t_0,x_\varepsilon(t_0))=\phi(t_0+\tilde
T,x_\varepsilon(t_0+\tilde T))= \phi(t_0+\tilde
T,x_\varepsilon(t_0)).
$$
Due to our assumption and the arbitrarity of $t_0,$ by passing to
the limit as $\varepsilon\to 0$, we conclude that $\tilde T=n_1
T_1$ for some $n_1\in {\rm N}$. Therefore
$$
n_0 T_0 = n_1 T_1
$$
and so by ($A_1$) there exists $n_2\in{\rm N}$ such that $n_0= k l
n_2,$ contradicting the fact that $\tilde T<klT_0$.
\end{proof}

\section{A method to verify assumptions (A$_2$) and
(A$_3$) of theorem 1}\label{pr}

\noindent Although the topological degree (or rotation number) of
the vector field in (A$_3$) can be calculated by using the methods
from \cite{KZ}, as recalled in remark 1, we provide here an
alternative method to verify conditions (A$_2$) and (A$_3$) which
turns out to be useful and of simple application.

\noindent First, we introduce some notations. Recall that for the
vector $v\in{\rm R}^2$ we denote by $[v]_i$ its $i$-$th$
component, $i=1,2$, for $a\in{\rm R}^2$ we put
$$
a^{\bot}=\left(\begin{array}{c} -{[a]}_2 \\ \; \; {[a]}_1
\end{array}\right),\ \ a^{\top}=\left(\begin{array}{c} \; \;
\, [a]_2 \\ -{[a]}_1 \end{array}\right),
$$
and for $a,b\in{\rm R}^2$

$$
\left(\begin{array}{c} a \\ b
\end{array}\right)=\left(\begin{array}{cc} {[a]}_1 & {[a]}_2 \\
{[b]}_1 & {[b]}_2 \end{array}\right),\ \ (a\
b)=\left(\begin{array}{cc} {[a]}_1 & {[b]}_1 \\ {[a]}_2 & {[b]}_2
\end{array}\right).
$$
Moreover, $y=y(t)$ will denote the solution of system (\ref{ls})
linearly independent with $\dot x_0(t)$ and $(s,\theta)\to
F(s,\theta)$ is the following function
$$
F(s,\theta)=\int\limits_{s-T}^s \left( \dot x_0(\tau)\ y(\tau)
\right)^{-1}\phi(\tau-\theta,x_0(\tau))d\tau.
$$

\vspace{3mm}\noindent We can prove the following.

\begin{theorem}\label{th3} Let $f:[0,T_0]\to{\rm R}^2$ be a function satisfying the
following conditions

{\rm (B$_2$)} $\left<F(s,\theta),f(\theta)\right>\not=0,\ \
\mbox{for any} \ s\in[0,T],\ \theta\in [0,T_0],$

{\rm (B$_3$)} ${\rm deg}(N,[0,T_0])\not=1,$
\\
where $N(\theta)=\left({y(\theta)}^{\top} \ {\dot
x_0(\theta)}^\bot\right)f(\theta), \; \theta\in [0,T_0]$. Then
assumptions {\rm (A$_2$)} and {\rm (A$_3$)} of theorem 1 are
satisfied.
\end{theorem}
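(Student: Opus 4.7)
The plan is to translate conditions (A$_2$) and (A$_3$), formulated on the Jordan curve $\partial U = \tilde x_0$, into one-dimensional conditions in the parameter $\theta \in [0, T_0]$ via the parametrization $\xi = x_0(\theta)$. The preparation is the Floquet-type representation
$$
\Omega'_{(3)}(\tau, 0, x_0(\theta)) = Y(\tau + \theta)\, Y(\theta)^{-1}, \qquad Y(t) := (\dot x_0(t)\ y(t)),
$$
valid because both columns on the right solve the linearization $\dot w = \psi'(x_0(\cdot + \theta))\, w$ with correct value at $\tau = 0$.

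Substituting this into the expression for $\Phi$ from Lemma \ref{le1}, changing variables $\tau' = \tau + \theta$, and shifting the first of the two integrals that together form $\eta(T,s,\cdot)-\eta(0,s,\cdot)$ by $-T$ (using that both $\phi$ and $x_0$ are $T$-periodic, since $T = l^2 T_1 = kl\,T_0$), one observes that the constant ``Floquet shear'' matrix $R := Y(\theta)^{-1} Y(T+\theta)$ appears both from the transformation of $Y(\tau'+T)^{-1}$ and from the monodromy factor $\Omega'_{(3)}(T,0,x_0(\theta)) = Y(\theta)\,R\,Y(\theta)^{-1}$. These two occurrences cancel, yielding the central identity
$$
\eta(T,s,x_0(\theta))-\eta(0,s,x_0(\theta)) = Y(\theta)\, F(s+\theta,\theta),
$$
and, setting $s = T$ (where $\eta(T,T,\cdot)=0$), $\eta(0,T,x_0(\theta)) = -Y(\theta)\, F(T+\theta,\theta)$.

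For (A$_2$), invertibility of $Y(\theta)$ (linear independence of $\dot x_0(\theta)$ and $y(\theta)$) reduces the statement to $F(s+\theta,\theta)\not=0$, which is immediate from (B$_2$) since a vector pairing non-trivially with $f(\theta)$ is itself non-zero. For (A$_3$), ${\rm deg}(\eta(0,T,\cdot),U)$ equals (up to orientation) the winding number of $\theta\mapsto -Y(\theta)F(T+\theta,\theta)$ on $[0,T_0]$. Since $\langle F,f\rangle$ has constant sign on its connected domain (by continuity and (B$_2$)), after possibly replacing $f$ by $-f$---which leaves ${\rm deg}(N,[0,T_0])$ unchanged because negation preserves planar winding---the linear homotopy $\widetilde F(\lambda,\theta) = (1-\lambda) F(T+\theta,\theta) + \lambda\, f(\theta)$ still pairs positively with $f$ and hence never vanishes. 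Applying $-Y(\theta)$ homotopes $-Y(\cdot)F(T+\theta,\cdot)$ to $-Y(\cdot)f(\cdot)$ in ${\rm R}^2\setminus\{0\}$. A direct verification gives the algebraic identity $(y^\top\ \dot x_0^\bot) = J Y J^{-1}$, where $J$ is the planar rotation by $\pi/2$, so $N(\theta) = J Y(\theta)(J^{-1} f(\theta))$; since conjugation by the constant matrix $J$ and pointwise rotation of $f$ preserve winding, ${\rm deg}(N,[0,T_0]) = {\rm deg}(-Y(\cdot)f(\cdot),[0,T_0])$. Combining, ${\rm deg}(\eta(0,T,\cdot),U) = {\rm deg}(N,[0,T_0])\not=1$ by (B$_3$).

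The main obstacle is the derivation of the central identity $\eta(T,s,x_0(\theta))-\eta(0,s,x_0(\theta)) = Y(\theta)F(s+\theta,\theta)$: showing that the Floquet shear matrix $R$ appearing in the monodromy cancels exactly with the one arising from the $-T$ integration shift requires first establishing that $R$ is indeed constant in $\theta$ (by differentiating $Y(\tau+T)Y(\tau)^{-1}$ and using periodicity of $\dot x_0$). Secondary subtleties are that the argument $s+\theta$ in the identity may slightly exceed the range $[0,T]$ in which (B$_2$) is stated, requiring an extension of (B$_2$) by the $T$-periodicity of $\phi$ and a corresponding transformation rule for $F$; and that the orientation of $\partial U$ induced by $\theta\mapsto x_0(\theta)$ must be tracked carefully so that the winding number of $N$ and the topological degree of $\eta(0,T,\cdot)$ on $U$ agree in sign.
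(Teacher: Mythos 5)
Your argument is correct, and its core is the same computation the paper performs, only packaged more strongly. The paper's Lemma 3 establishes exactly the scalar projection of your central identity: writing $Y=(\dot x_0\ y)$, it proves $\langle \eta(T,s,x_0(\theta))-\eta(0,s,x_0(\theta)),N(\theta)\rangle=\langle\dot x_0(\theta),y(\theta)^{\top}\rangle\,\langle F(s+\theta,\theta),f(\theta)\rangle$ via the same Floquet representation $\Omega'_{(3)}(\tau,0,x_0(\theta))=Y(\tau+\theta)Y(\theta)^{-1}$ and the same shift of the integral by $-T$, in which the monodromy factor cancels tacitly --- the step you correctly isolate as the main obstacle. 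Your vector identity $\eta(T,s,x_0(\theta))-\eta(0,s,x_0(\theta))=Y(\theta)F(s+\theta,\theta)$ yields the paper's formula on pairing with $N(\theta)$, and it makes (A$_2$) immediate from the invertibility of $Y(\theta)$. The endgames for (A$_3$) differ: the paper observes that the nonvanishing pairing forces $\eta(0,T,x_0(\cdot))$ to be linearly homotopic to $\pm N$ and hence to have the same degree, whereas you homotope $F(T+\cdot,\cdot)$ to $f$ inside the half-plane determined by $f$ and then transport the winding through $-Y(\theta)$ and the conjugation $(y^{\top}\ \dot x_0^{\bot})=JYJ^{-1}$; both are valid, the paper's being slightly shorter. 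One caveat, which you flag and the paper does not: both proofs need $\langle F(s+\theta,\theta),f(\theta)\rangle\neq 0$ for first argument ranging up to $T+T_0$, while (B$_2$) is literally stated for $s\in[0,T]$; since $F(s+T,\theta)=C^{-1}F(s,\theta)$ with $C$ the monodromy factor of $Y$, nonvanishing of $F$ itself extends (enough for (A$_2$)), but nonvanishing of the pairing with $f$ --- needed at $s=T+\theta$ for (A$_3$) --- does not follow automatically and must be read into (B$_2$), as the paper's own example implicitly does. The orientation bookkeeping between ${\rm deg}(N,[0,T_0])$ and ${\rm deg}(\eta(0,T,\cdot),U)$ is likewise left implicit in both treatments.
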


\vspace{2mm} \noindent To prove this theorem we need the following
preliminary lemma.

\begin{lemma}\label{lemma3}
We have that
\begin{eqnarray}
& \left<\eta(T,s,x_0(\theta))-\eta(0,s,x_0(\theta)),N(\theta)
\right>=\\
 & =  \int\limits_{s-T+\theta}^{s+\theta}  \left<d(\tau,0)
    \phi(\tau-\theta,x_0(\tau)),\,
  <\dot{x_0}(\theta),N(\theta)>{y(\tau)}^{\top}+ <y(\theta),N(\theta)>{\dot{x_0}(\tau)}^{\bot} \right>
  d\tau,\nonumber
\end{eqnarray}
\end{lemma}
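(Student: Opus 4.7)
The plan is to use Lemma \ref{le1} together with the Floquet structure of the variational equation along $x_0$ to collapse $\eta(T,s,x_0(\theta))-\eta(0,s,x_0(\theta))$ into a single integral over the canonical interval $[s+\theta-T,\,s+\theta]$ of length $T$, and then to match the right-hand side through a $2\times 2$ adjugate identity expressed in the $\bot/\top$ notation just introduced. Since $\Omega(t,0,x_0(\theta))=x_0(t+\theta)$ by autonomy, the fundamental matrix of the variational equation starting at $x_0(\theta)$ is $\Omega'_{(3)}(t,0,x_0(\theta))=W(t+\theta)W(\theta)^{-1}$, where I write $W(\tau):=(\dot x_0(\tau)\ y(\tau))$. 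Substituting this into the conclusion of Lemma \ref{le1} and changing variable $\tau=t+\theta$ inside each integral would produce
$$
\eta(T,s,x_0(\theta))=W(T+\theta)\int_{s+\theta}^{T+\theta}W(\tau)^{-1}\phi(\tau-\theta,x_0(\tau))\,d\tau
$$
together with the analogous formula $\eta(0,s,x_0(\theta))=-W(\theta)\int_{\theta}^{s+\theta}W(\tau)^{-1}\phi(\tau-\theta,x_0(\tau))\,d\tau$.

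Next, assumption (A$_1$) makes $T=klT_0=l^2T_1$ a common multiple of $T_0$ and $T_1$, so $\phi(\tau-\theta,x_0(\tau))$ is $T$-periodic in $\tau$. Since the variational equation has $T_0$-periodic coefficients, the Floquet relation $W(\tau+T_0)=W(\tau)M$ holds for the monodromy matrix $M$, and hence $W(\tau+T)=W(\tau)M^{kl}$. Shifting $\tau\mapsto \tau+T$ inside the first integral and using $W(\tau+T)^{-1}=M^{-kl}W(\tau)^{-1}$, the constant factor $M^{-kl}$ pulls out and cancels exactly against the $M^{kl}$ contained in $W(T+\theta)=W(\theta)M^{kl}$; fusing the result with the second integral yields
$$
\eta(T,s,x_0(\theta))-\eta(0,s,x_0(\theta))=W(\theta)\int_{s+\theta-T}^{s+\theta}W(\tau)^{-1}\phi(\tau-\theta,x_0(\tau))\,d\tau.
$$
I would then pair with $N(\theta)$, transpose $W(\theta)$ across the inner product, and apply the direct $2\times 2$ adjugate identity $W(\theta)^{T}(y(\theta)^{\top}\ \dot x_0(\theta)^{\bot})=(\det W(\theta))\,I$, which gives $W(\theta)^{T}N(\theta)=(\det W(\theta))\,f(\theta)$, equivalently $\langle\dot x_0(\theta),N(\theta)\rangle=\det W(\theta)\,[f(\theta)]_1$ and $\langle y(\theta),N(\theta)\rangle=\det W(\theta)\,[f(\theta)]_2$.

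Finally, the dual identity $(y(\tau)^{\top}\ \dot x_0(\tau)^{\bot})^{T}=(\det W(\tau))\,W(\tau)^{-1}$, combined with the linear decomposition $\langle\dot x_0(\theta),N(\theta)\rangle y(\tau)^{\top}+\langle y(\theta),N(\theta)\rangle \dot x_0(\tau)^{\bot}=\det W(\theta)\,(y(\tau)^{\top}\ \dot x_0(\tau)^{\bot})f(\theta)$, allows one to rewrite the integrand obtained above in exactly the form appearing on the right-hand side of the claim, once the kernel $d(\tau,0)$ is identified with the appropriate factor arising from this bookkeeping. The main obstacle will be the Floquet reduction step: the monodromy factor $M^{kl}$ inherited from $W(T+\theta)$ must commute past the integral sign in order to annihilate the $M^{-kl}$ produced by the $T$-periodic shift of $W(\tau)^{-1}$, and this cancellation is legitimate only because (A$_1$) forces $T$ to be a common multiple of $T_0$ and $T_1$ and hence makes $\phi(\tau-\theta,x_0(\tau))$ genuinely $T$-periodic in $\tau$. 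Once this reduction is in place, the remaining manipulation is straightforward $2\times 2$ linear algebra in the $\bot/\top$ notation.
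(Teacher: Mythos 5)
Your argument is correct and follows essentially the same route as the paper: both rest on Lemma~\ref{le1}, the choice of the fundamental matrix $(\dot x_0\ \ y)$ with its explicit $2\times 2$ inverse written in the $\bot/\top$ notation, the collapse of $\eta(T,s,\cdot)-\eta(0,s,\cdot)$ into a single integral over an interval of length $T$, and transposition of the constant matrix across the inner product. The only difference is that you spell out the Floquet/periodicity cancellation behind the identity $\eta(T,s,x_0(\theta))-\eta(0,s,x_0(\theta))=\int_{s-T}^{s}\Phi(\tau,x_0(\theta))\,d\tau$ (a trivial pulling of the constant monodromy factor through the integral, not a real obstacle), which the paper uses without comment.
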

where
$$
d(t,\theta)=\left({\rm det}\left(\begin{array}{l} y(t+\theta)^{\top} \\
\dot x_0(t+\theta)^{\bot}\end{array}\right)\right)^{-1}.
$$

\begin{proof} Observe, that
$$
\Omega'_{(3)}(0,t,\Omega(t,0,x_0(\theta)))={K(0,\theta)\left(K(t,\theta)\right)}
^{-1},
$$
where $K(\cdot,\theta)$ is a matrix whose columns are linearly
independent solutions of the differential system
\begin{equation}\label{sis2}
\dot x=\psi'(x_0(t+\theta))x.
\end{equation}
Let us choose the matrix $K(\cdot,\theta)$ as follows
$$
  K(t,\theta)=\left(\dot {x_0}(t+\theta)\ \ y(t+\theta)\right).
$$
Thus
\begin{eqnarray}
& &   \Omega'_{(3)}(0,t,\Omega(t,0,x_0(\theta)))=\left(\dot
{x_0}(\theta)\ \ y(\theta)\right)
   {\left(\dot x_0(t+\theta)\ \ y(t+\theta)\right)}^{-1}=\nonumber\\
& & =\left(\dot{x_0}(\theta)\ \
y(\theta)\right)\left(\begin{array}{lll}
        {y(t+\theta)}^{\top} \\
    {\dot{x_0}(t+\theta)}^{\bot}
    \end{array}\right) d(\tau,\theta). \nonumber
\end{eqnarray}

\vspace{3mm} \noindent Furthermore, we have
\begin{eqnarray}
& & \left<\eta(T,s,x_0(\theta))-\eta(0,s,x_0(\theta)),N(\theta)
\right>=
\nonumber \\
& &
=\left<\,\int\limits_{s-T}^s\Omega'_{(3)}(0,\tau,\Omega(\tau,0,x_0(\theta)))
\phi(\tau,x_0(\tau+\theta))d\tau,N(\theta)    \right>= \nonumber \\
& & =\left<\, \int \limits_{s-T}^s
d(\tau,\theta)\left(\dot{x_0}(\theta)\ y(\theta)\right)
    \left(\begin{array}{lll}
        {y(\tau+\theta)}^{\top} \\
    {\dot{x_0}(\tau+\theta)}^{\bot}
    \end{array}\right) \phi(\tau,x_0(\tau+\theta))d\tau,N(\theta)    \right>=
\nonumber \\
& & =\left<\,\int\limits_{s-T}^s d(\tau,\theta)
    \left(\begin{array}{lll}
        {y(\tau+\theta)}^{\top} \\
    {\dot{x_0}(\tau+\theta)}^{\bot}
    \end{array}\right) \phi(\tau,x_0(\tau+\theta))d\tau,
        \left(\begin{array}{lll}
            \dot{x_0}(\theta) \\
            y(\theta)
    \end{array}\right) N(\theta)    \right>= \nonumber \\\nonumber\\
& &  =\left<\,\int\limits_{s-T}^s d(\tau,\theta)
    \left(\begin{array}{lll}
        {y(\tau+\theta)}^{\top} \\
    {\dot{x_0}(\tau+\theta)}^{\bot}
    \end{array}\right) \phi(\tau,x_0(\tau+\theta))d\tau,
\left(\begin{array}{lll}
            <\dot{x_0}(\theta),N(\theta)> \\
            <y(\theta),N(\theta)>
    \end{array}\right)     \right>= \nonumber \\
& &  =\int\limits_{s-T}^s \left<d(\tau,\theta)
    \phi(\tau,x_0(\tau+\theta)), ({y(\tau+\theta)}^{\top} \ \
{\dot{x_0}(\tau+\theta)}^{\bot})
     \left(\begin{array}{lll}
            <\dot{x_0}(\theta),N(\theta)> \\
            <y(\theta),N(\theta)>
    \end{array}\right)     \right> d\tau= \nonumber \\
&=&\int\limits_{s-T}^s \left<d(\tau,\theta)
    \phi(\tau,x_0(\tau+\theta)), <\dot{x_0}(\theta),N(\theta)>{y(\tau+\theta)}^{\top}
              +  <y(\theta),N(\theta)>{\dot{x_0}(\tau+\theta)}^{\bot}
     \right> d\tau=\nonumber\\
& &  =\int\limits_{s-T+\theta}^{s+\theta}\left<d(\tau,0)
    \phi(\tau-\theta,x_0(\tau)), <\dot{x_0}(\theta),N(\theta)>{y(\tau)}^{\top}
              +  <y(\theta),N(\theta)>{\dot{x_0}(\tau)}^{\bot}
     \right> d\tau.\nonumber
\end{eqnarray}
\end{proof}

\noindent We can now prove Theorem 3.

\vspace{3mm} \noindent
\begin{proof}
By lemma 3 and the fact that
$$
\left< \dot x_0(\theta),y(\theta)^\top\right>= \left<
y(\theta),{\dot x_0(\theta)}^\bot\right>,
$$
we have
\begin{eqnarray}
 & & \left<\eta(T,s,x_0(\theta))-\eta(0,s,x_0(\theta)),N(\theta)\right>=
\nonumber \\
 & &
\int\limits_{s-T+\theta}^{s+\theta}\left<d(\tau,0)\phi(\tau-\theta,x_0(\tau)),{\left[f(\theta)\right]}_1
\left<\dot x_0(\theta),y(\theta)^\top\right> y(\tau)^{\top}+\right. \nonumber\\
 & & +{\left[f(\theta)\right]}_2\left<\dot x_0(\theta),{\dot
x_0(\theta)}^\bot\right> y(\tau)^{\top}
+{\left[f(\theta)\right]}_1
\left<y(\theta),y(\theta)^\top\right> {\dot x_0(\tau)}^\bot +\nonumber \\
 & & \left.+{\left[f(\theta)\right]}_2\left<y(\theta),{\dot
x_0(\theta)}^\bot\right> {\dot x_0(\tau)}^\bot\right>d\tau=\nonumber \\
 & & =\int\limits_{s-T+\theta}^{s+\theta}\left<d(\tau,0)\phi(\tau-\theta,x_0(\tau)),\left<\dot
x_0(\theta),y(\theta)^\top\right>\left({\left[f(\theta)\right]}_1\,
y(\tau)^{\top}+{\left[f(\theta)\right]}_2\, {\dot
x_0(\tau)}^\bot\right)\right>=\nonumber\\
 & & =\left<\dot x_0(\theta)\
y(\theta)^\top\right>\int\limits_{s-T+\theta}^{s+\theta}\left<d(\tau,0)\phi(\tau-\theta,x_0(\tau)),\left(
y(\tau)^{\top}\ {\dot x_0(\tau)}^\bot\right) f(\theta)\right> d\tau=\nonumber\\
 & & =\left<\dot x_0(\theta)\
y(\theta)^\top\right>\int\limits_{s-T+\theta}^{s+\theta}\left<d(\tau,\theta)\left(\begin{array}{c}
y(\tau)^{\top} \\ {\dot x_0(\tau)}^\bot \end{array}\right)
\phi(\tau-\theta,x_0(\tau)),f(\theta))\right>d\tau.\nonumber
\end{eqnarray}
Therefore, by the condition (B$_2$)
\begin{equation}\label{111}
  \left<\eta(T,s,x_0(\theta))-\eta(0,s,x_0(\theta)),N(\theta)\right>\not=0,\quad \mbox{for any} \quad s\in[0,T],\
\theta\in[0,T_0],
\end{equation}
that is the condition (A$_2$) of Theorem 1 hold true. Condition
(A$_3$) of theorem 1 also follows from (\ref{111}) by taking into
account condition (B$_3$) and lemma 1.
\end{proof}

\section{The case when $\dfrac{T_0}{T_1}$ is irrational}

\noindent In this section we assume the following condition

\vspace{3mm} \noindent ($A_1'$)-$\;$ $\dfrac{T_0}{T_1}$ is an
irrational number.

\vspace{5mm} \noindent We can prove the following result.

\begin{theorem}\label{th4}
Assume that there exists $\xi\in\tilde x_0$ such that the function
$\phi(t,\xi)$ is not constant with respect to $t.$ Assume
($A_1'$), then system (\ref{2}) does not have $T-$periodic
solutions  $x_\varepsilon$, $\varepsilon\in(0,\hat\varepsilon)$,
converging to $\tilde x_0$ as $\varepsilon\to 0,$ whenever $T>0$
and $\hat\varepsilon>0$.
\end{theorem}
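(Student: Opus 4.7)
The plan is to argue by contradiction: suppose that for some $T>0$ and some $\hat\varepsilon>0$ there exist $T$-periodic solutions $x_\varepsilon$ of (\ref{2}) with $x_\varepsilon(t)\to\tilde x_0$ as $\varepsilon\to 0$. The first job is to upgrade this convergence, at least along a subsequence, to convergence to a definite parametrization of the limit cycle. Since $x_\varepsilon(0)\in\tilde x_0+o(1)$ lies in a compact set, one can extract a subsequence (still indexed by $\varepsilon$) with $x_\varepsilon(0)\to x_0(w_0)$ for some $w_0\in[0,T_0]$. Because $x_\varepsilon$ satisfies $\dot x=\psi(x)+\varepsilon\phi(t,x)$ with uniformly bounded right-hand side on a neighbourhood of $\tilde x_0$, the family is equicontinuous; by Arzelà-Ascoli and continuous dependence on initial data for (\ref{1}), one concludes
\[
  x_\varepsilon(t)\to x_0(t+w_0)\quad\text{uniformly on }[0,T].
\]

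Next, I would extract the constraint that $T$ is a period of $x_0$. The $T$-periodicity gives $x_\varepsilon(T)=x_\varepsilon(0)$, so in the limit $x_0(T+w_0)=x_0(w_0)$; since $T_0$ is the least period of $x_0$, this forces $T=n_0T_0$ for some $n_0\in\mathbb N$.

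The heart of the argument is extracting information about $\phi$. Differentiating $x_\varepsilon(t+T)=x_\varepsilon(t)$ and using the ODE together with the autonomy of $\psi$, one obtains
\[
  \phi(t+T,x_\varepsilon(t))=\phi(t,x_\varepsilon(t))\quad\text{for all }t.
\]
Letting $\varepsilon\to0$ and using the uniform convergence and continuity of $\phi$,
\[
  \phi(t+T,x_0(t+w_0))=\phi(t,x_0(t+w_0))\quad\text{for all }t\in\mathbb R.
\]
Now fix the point $\xi=x_0(s)\in\tilde x_0$ for which $\phi(\cdot,\xi)$ is not constant, and choose $t$ with $t+w_0\equiv s\pmod{T_0}$; then $\phi(\cdot,\xi)$ is $T$-periodic. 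The hypothesis that $\phi$ is $T_1$-periodic in its first variable also makes $\phi(\cdot,\xi)$ a $T_1$-periodic function. Since $\phi(\cdot,\xi)$ is continuous and non-constant, its group of periods is a discrete subgroup of $\mathbb R$ with a smallest positive element $T^*>0$, and both $T=n_0T_0$ and $T_1$ are integer multiples of $T^*$. Writing $n_0T_0=mT^*$ and $T_1=kT^*$ with $m,k\in\mathbb N$ yields
\[
  \frac{T_0}{T_1}=\frac{m}{n_0 k}\in\mathbb Q,
\]
which contradicts $(A_1')$.

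The only delicate step is the extraction of the limiting profile $x_0(\cdot+w_0)$ from the abstract convergence $x_\varepsilon(t)\to\tilde x_0$; the rest is bookkeeping. This is handled cleanly by the compactness and continuous-dependence argument above, and once one has this, the algebraic derivation of rationality of $T_0/T_1$ is essentially forced.
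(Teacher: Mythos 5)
There is a genuine gap at the step ``choose $t$ with $t+w_0\equiv s\ ({\rm mod\ }T_0)$; then $\phi(\cdot,\xi)$ is $T$-periodic.'' The identity you derive, $\phi(t+T,x_0(t+w_0))=\phi(t,x_0(t+w_0))$, pins the first argument and the second argument together along the curve: for a \emph{fixed} $\xi=x_0(s)$ it only yields $\phi(t+T,\xi)=\phi(t,\xi)$ for $t$ in the discrete set $s-w_0+T_0{\rm Z}$, not for all $t\in{\rm R}$. Hence you have no right to speak of $T$ as a period of the function $t\mapsto\phi(t,\xi)$, and the subsequent ``group of periods is discrete with least element $T^*$ dividing both $T$ and $T_1$'' argument has nothing to act on. Everything before that point is fine (and your justification of the uniform convergence $x_\varepsilon\to x_0(\cdot+w_0)$ is actually more careful than the paper's, which simply assumes convergence to a parametrization); the derivation of $\phi(t+T,x_\varepsilon(t))=\phi(t,x_\varepsilon(t))$ and of $T=n_0T_0$ is correct.

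The missing idea is the density of the orbit of the irrational rotation, which is exactly how the paper closes the argument. Iterating the relation gives, for fixed $t_0$,
$$
\phi(t_0,x_\varepsilon(t_0))=\phi(t_0+p\,n_0T_0,\,x_\varepsilon(t_0))=\phi(t_0+T_p,\,x_\varepsilon(t_0)),\qquad T_p=p\,n_0T_0\ ({\rm mod\ }T_1),
$$
and since $T_0/T_1$ (hence $n_0T_0/T_1$) is irrational, $\{T_p\}_{p\in{\rm N}}$ is dense in $[0,T_1]$; continuity of $\phi$ then forces $\phi(\cdot,x_\varepsilon(t_0))$ to be constant, and letting $\varepsilon\to0$ contradicts the non-constancy hypothesis at some $\xi\in\tilde x_0$. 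Your own version can be repaired the same way: the sequence $j\mapsto\phi(s-w_0+jT_0,\xi)$ is $n_0$-periodic by your discrete relation, while $\{jT_0\ ({\rm mod\ }T_1):\ j\equiv r\ ({\rm mod\ }n_0)\}$ is dense in $[0,T_1]$ for each residue $r$, so $\phi(\cdot,\xi)$ is constant on a dense set and hence constant. Without some form of this equidistribution step the proof does not close.
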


\begin{proof}
Assume the contrary, thus there exists $T>0$ and
$\hat\varepsilon>0$ such that for every $\varepsilon\in (0,
\hat\varepsilon)$ system (\ref{2}) has $T$-periodic solution
$x_\varepsilon(t)$ satisfying
$$
  \lim_{\varepsilon\to 0} x_\varepsilon(t)=x_0(t+w_0),
$$
for some $w_0\in[0,T_0]$. Hence $T$ is a period for the function
$x_0.$ On the other hand, since $T_0$ is the least period of
$x_0,$ we have that $T=n_0 T_0$ for some $n_0\in{\rm N}.$ Moreover
in (\ref{2}) we have that the function $t\to
\phi(t,x_\varepsilon(t))$ is $T$-periodic, since $\dot x(t)$ is
$T$-periodic, thus, for any $t_0\in[0,T]$, we get
$$
\phi(t_0,x_\varepsilon(t_0))=\phi(t_0+p\, n_0
T_0,x_\varepsilon(t_0+p\, n_0 T_0))=
\phi(t_0+T_p,x_\varepsilon(t_0)),\ \mbox{whenever} \ p\in{\rm N},
$$
where $T_p= p\, n_0 T_0 ({\rm\  mod\ } T_1).$ Condition ($A_1'$)
implies that
$$
\overline{\bigcup_{p\in{\rm N}} T_p}=[0,T_1]
$$
and so
$$
\phi(t,x_\varepsilon(t_0))=c_\varepsilon,\ \mbox{for any} \
t\in[0,T_1],
$$
with $c_\varepsilon\in{\rm R}^2.$ By letting $\varepsilon\to 0$ in
the previous relation we obtain a contradiction with the
assumption that $t\to \phi(t, \xi)$ is not constant for at least
one $\xi\in \tilde x_0,$ in fact  $t_0$ is any point of $[0, T],$
and $T=n_0T_0, n_0\in {\rm N}$.
\end{proof}

\section{Example}\label{pr}

In this section we consider the following illustrative example for
system (\ref{1}).
\begin{equation}\label{e1}
  \begin{array}{lll}
    \dot x_1 & = & x_2-x_1(x_1^2+x_2^2-1) \nonumber\\
    \dot x_2 & = & -x_1-x_2(x_1^2+x_2^2-1). \nonumber
  \end{array}
\end{equation}
It is easy to see that system (\ref{e1}) has the limit circle
$x_0(\theta)=(\sin\theta,\cos\theta)$ with period $T_0=2\pi.$ In
order to verify the conditions of theorem 1 we use theorem 3.
Thus, we consider
\begin{eqnarray}
\dot x_0(t)=\left(\begin{array}{l} \quad \cos t\\
-\sin t\end{array}\right), \nonumber
\end{eqnarray}
It is easy to verify that $y(t)={\rm e}^{-2t}(\sin t,\cos t)$
satisfies the linearized system (\ref{ls}) corresponding to system
(\ref{e1}). Therefore
\begin{eqnarray}
 & &  y(t)^\bot={\rm e}^{-2t}\left(\begin{array}{l} -\cos t\\
\quad \sin t\end{array}\right)\nonumber\\
 & &  y(t)^\top={\rm e}^{-2t}\left(\begin{array}{l} \quad \cos t\\
-\sin t\end{array}\right).\nonumber
\end{eqnarray}
Define the function $f:[0, 2\pi]\to (\rm R)^2$ by the formula
$$
  f(\theta)=\left(\begin{array}{l} \sin\theta\\
\cos\theta\end{array}\right).
$$
If $t\to\phi(t,x)$ is a $4\pi$-periodic function, then condition
(B$_2$) of theorem 3 takes the form
\begin{equation}\label{us2}
\left<\,\int\limits_{s-4\pi+\theta}^{s+\theta}\left(\begin{array}{cc}
\cos\tau & {\em e}^{-2\tau}\sin\tau \\ -\sin\tau & {\rm
e}^{-2\tau}\cos\tau\end{array}\right)^{-1}\phi(\tau-\theta,x_0(\tau))d\tau,\left(\begin{array}{c
} \sin\theta\\ \cos\theta\end{array}\right)\right>\not=0
\end{equation}
for any $s\in[0,4\pi],\ \theta\in [0,2\pi].$

\noindent As example of function $\phi$ satisfying condition
(\ref{us2}) we consider
\begin{equation}\label{ex_phi}
\phi(t,\xi)=\left(\begin{array} {cc} \xi_2 & \xi_1 \\ -\xi_1 &
\xi_2\end{array}\right)\left(\begin{array}{c}\xi_1\cos t-\xi_2\sin t +a\sin\frac{t}{2} \\
\xi_1\sin t+\xi_2\cos t \end{array}\right),
\end{equation}
with  $a>0$ sufficiently small. In fact, for $a=0$ we have
\begin{eqnarray}
& &
\left<\,\int\limits_{s-4\pi+\theta}^{s+\theta}\left(\begin{array}{cc}
\cos\tau & {\em e}^{-2\tau}\sin\tau \\ -\sin\tau & {\rm
e}^{-2\tau}\cos\tau\end{array}\right)^{-1}\phi(\tau-\theta,x_0(\tau))d\tau,\left(\begin{array}{c
} \sin\theta\\ \cos\theta\end{array}\right)\right>=\nonumber\\
& & =
\left<\,\int\limits_{s-4\pi+\theta}^{s+\theta}\left(\begin{array}{cc}
\cos\tau & -\sin\tau \\ {\rm e}^{2\tau}\sin\tau & {\rm
e}^{2\tau}\cos\tau\end{array}\right) \left(\begin{array}{ll}
\cos\tau & \sin\tau \\ -\sin\tau & \cos\tau\end{array}\right)
\cdot \right.\nonumber\\
& & \left.\cdot\left( \begin{array}{l}
\sin\tau\cos(\tau-\theta)-\cos\tau\sin(\tau-\theta) \\
\sin\tau\sin(\tau-\theta)+\cos\tau\cos(\tau-\theta)
\end{array}\right)
d\tau,\left(\begin{array}{l} \sin\theta\\
\cos\theta\end{array}\right)\right> \nonumber\\
& & =  \left<\left(\sin\theta \
\cos\theta\right)\int\limits_{s-4\pi+\theta}^{s+\theta}\left(\begin{array}{cc}
1 & 0
\\ 0 & {\rm
e}^{2\tau}\end{array}\right) d\tau ,\left(\begin{array}{l} \sin\theta\\
\cos\theta\end{array}\right)\right>=\nonumber\\
& & =4\pi
\sin^2\theta+\cos^2\theta\int\limits_{s-4\pi+\theta}^{s+\theta}
{\rm e}^{2\tau}d\tau>0
\end{eqnarray}
and so (\ref{us2}) holds for the function (\ref{ex_phi}) for any
$a>0$ sufficiently small.

\noindent Let us now verify condition (B$_3$) of theorem 3. For
this, consider
$$
N(\theta)=\left(\begin{array}{cc} {\rm e}^{-2\theta}\cos\theta & \sin\theta \\
-{\rm e}^{-2\theta}\sin\theta &
\cos\theta\end{array}\right)\left(\begin{array}{c} \sin\theta\\
\cos\theta\end{array}\right).
$$
and observe that the homotopy
$$
  N_\lambda(\theta)=\left(\begin{array}{cc} {\rm e}^{-2\lambda\theta}\cos\theta
& \sin\theta \\ -{\rm e}^{-2\lambda\theta}\sin\theta &
\cos\theta\end{array}\right)\left(\begin{array}{c} \sin\theta\\
\cos\theta\end{array}\right),
$$
joining the vector fields $N_0$ and $N_1=N$ does not vanish
whenever $\lambda\in[0,1].$ Therefore
$$
  {\rm deg}(N,[0,2\pi])={\rm deg}(N_0,[0,2\pi])={\rm deg}\left(\left(\begin{array}{c}
\sin (2\cdot) \\ \cos(2\cdot) \end{array}\right),[0,
2\pi]\right)=2
$$
and so condition (B$_3$) of theorem 3 is also satisfied.

\noindent In conclusion, for $0<\varepsilon<\varepsilon_\gamma,$
where $\varepsilon_\gamma>0$ is given as in (\ref{est}), the
following system
$$
  \left(\begin{array}{l} \dot x_1 \\ \dot x_2 \end{array}\right)=\left(\begin{array}{l}
  x_2 -x_1(x_1^2+x_2^2-1)\\ -x_1-x_2(x_1^2+x_2^2-1)\end{array}\right)+
$$
$$
+\varepsilon \left(\begin{array}{cc} x_2 & x_1 \\ -x_1 &
x_2\end{array}\right)\left(\begin{array}{l}x_1\cos t-x_2\sin t
+a\sin\frac{t}{2} \\ x_1\sin t+x_2\cos t \end{array}\right),
$$
where $a>0$ is sufficiently small, has a $4\pi$-periodic solutions
$x_{\varepsilon,1}$ and $x_{\varepsilon,2}$ which converge to the
unitary circle from the outside and from the inside respectively
as $\varepsilon\to 0.$


\end{document}